\nonstopmode
\documentclass[10pt, reqno]{amsart}
\usepackage{graphicx}
\usepackage{latexsym}
\usepackage{fancyhdr}
\usepackage{amsmath, amssymb, amsthm}
\usepackage[all]{xy}
\usepackage{pdflscape}
\usepackage{longtable}
\usepackage{rotating}
\usepackage{verbatim}
\usepackage
{hyperref}
\hypersetup{
    linkcolor = blue,
    citecolor = blue,
    urlcolor  = blue,
    colorlinks = true,
}

\usepackage{etoc}
\etocsettocstyle
  {\medskip}          
  {\bigskip}

\etocsetstyle{section}                                          
  {}
  {\leavevmode\leftskip 0pt\relax}            
  {\etocnumber \etocname          
   \dotfill\etocpage\par\smallskip}
  {}                                          

\etocsetstyle{subsection}
  {}
  {\leavevmode\leftskip 1.5em\relax}
  {\etocnumber\ \etocname\dotfill\etocpage\par\smallskip}
  {}

\etocsettocstyle
  {\section*{\contentsname}\small}           
  {\bigskip}                                 

\usepackage{cleveref}
\usepackage{subfigure}
\usepackage{mathrsfs}
\usepackage{mdwlist}
\usepackage{dsfont}

\usepackage{mathtools}
\mathtoolsset{showonlyrefs=true}

\usepackage{float}
\usepackage{color}
\usepackage{stmaryrd}
\usepackage{parskip}
\usepackage{orcidlink}

\usepackage{tkz-base}
\usepackage{pgfplots}
\pgfplotsset{compat=newest}

\usepackage{tkz-euclide}
\usepackage{etoolbox}

\definecolor{teal}{rgb}{0.0, 0.5, 0.5}

\newcounter{mnotecount}[section]

\newcommand{\rmnote}[1]{}

\allowdisplaybreaks

\DeclareFontFamily{U}{mathb}{\hyphenchar\font45}
\DeclareFontShape{U}{mathb}{m}{n}{
      <5> <6> <7> <8> <9> <10> gen * mathb
      <10.95> mathb10 <12> <14.4> <17.28> <20.74> <24.88> mathb12
      }{}
\DeclareSymbolFont{mathb}{U}{mathb}{m}{n}
\DeclareFontSubstitution{U}{mathb}{m}{n}

\theoremstyle{plain}
\newtheorem*{theorem*}{Theorem}
\newtheorem{theorem}{Theorem}[section]
\newtheorem*{lemma*}{Lemma}
\newtheorem{lemma}[theorem]{Lemma}
\newtheorem*{assumption*}{Assumption}

\newtheorem*{proposition*}{Proposition}
\newtheorem{proposition}[theorem]{Proposition}
\newtheorem*{corollary*}{Corollary}
\newtheorem{corollary}[theorem]{Corollary}
\newtheorem*{claim*}{Claim}

\newtheorem*{conjecture*}{Conjecture}

\newtheorem*{question*}{Question}

\newtheorem*{result*}{Result}

\theoremstyle{definition}
\newtheorem*{definition*}{Definition}
\newtheorem{definition}[theorem]{Definition}
\newtheorem*{example*}{Example}

\newtheorem*{algorithm*}{Algorithm}
\newtheorem*{remark*}{Remark}
\newtheorem*{remarks*}{Remarks}
\newtheorem{remark}[theorem]{Remark}

\newtheorem*{convention*}{Convention}

\Crefname{l}{Lemma}{Lemmas}    
\Crefname{p}{Proposition}{Propositions}
\Crefname{t}{Theorem}{Theorems}
\Crefname{c}{Corollary}{Corollaries}
\Crefname{r}{Remark}{Remarks}
\Crefname{d}{Definition}{Definitions}
\Crefname{e}{Example}{Examples}
\Crefname{q}{Question}{Questions}

\numberwithin{equation}{section}

\def\al{\alpha}
\def\be{\beta}

\def\ep{\epsilon}

\def\ze{\zeta}
\def\et{\eta}

\def\rh{\rho}

\def\si{\sigma}

\def\ta{\tau}

\def\om{\omega}

\def\Ga{\Gamma}

\def\C{\mathbb{C}}

\def\N{\mathbb{N}}
\def\Q{\mathbb{Q}}
\def\R{\mathbb{R}}
\def\Z{\mathbb{Z}}

\def\cC{\mathcal{C}}

\def\cL{\mathcal{L}}

\def\cP{\mathcal{P}}

\def\cV{\mathcal{V}}

\def\p{\partial}

\def\<{\langle}
\def\>{\rangle}
\renewcommand{\o}{\circ}

\def\ol{\overline}

\def\nn{\mathbf{N}}

\def\ee{\mathbf{e}}
\def\zz{\mathbf{z}}

\def\ol{\overline}

\def\a#1{\left\llbracket{#1}\right\rrbracket}

\let\on=\operatorname

\newcommand{\sr}[1]%
{\ifmmode{}^\dagger\else${}^\dagger$\fi\ifvmode
\vbox to 0pt{\vss
 \hbox to 0pt{\hskip\hsize\hskip1em
 \vbox{\hsize3cm\raggedright\pretolerance10000
 \noindent #1\hfill}\hss}\vss}\else
 \vadjust{\vbox to0pt{\vss%
 \hbox to 0pt{\hskip\hsize\hskip1em%
 \vbox{\hsize3cm\raggedright\pretolerance10000%
 \noindent \small #1\hfill}\hss}\vss}}\fi%
}

\providecommand{\mapsfrom}{\kern.2em%
\setbox0=\hbox{$\leftarrow$\kern-.10em\rule[0.26mm]{0.1mm}{1.3mm}}\box0%
\kern.3em}

\title[]{B\'ezout bounds and rational points on quasianalytic graphs}

\author[Armin Rainer]{Armin Rainer  \orcidlink{0000-0003-3825-3313}}

\address{Armin Rainer: Faculty of Mathematics and Geoinformation,
    Institute for Statistics and Mathematical Methods in Economics, E105-04,
TU Wien, Wiedner Hauptstraße 8, 1040 Vienna, Austria}
\email{armin.rainer@tuwien.ac.at}

\begin{document}

\begin{abstract}
    For functions $f : [0,1] \to [0,1]$ in quasianalytic Denjoy--Carleman classes $\cC_M$, 
    we establish explicit bounds for the number of intersection points of the graph $\Ga(f)$ 
    with algebraic curves of degree at most $d$. The bounds are given in terms of the Bang 
    degree determined by the weight sequence $M$, the degree $d$, and the norming constant of 
    $\Ga(f)$ with respect to the space of real bivariate polynomials of degree at most $d$. 
    We give instances, where the bounds are at most polynomial in $d$ and, consequently, 
    the graph $\Ga(f)$ contains few rational points: the number of rational points of height 
    at most $H$ is bounded by a polynomial in $\log H$.
\end{abstract}

\thanks{This research was funded in whole or 
    in part by the Austrian Science Fund (FWF)  DOI 10.55776/PAT1381823.
For open access purposes, the author has applied a CC BY public copyright license to any author-accepted manuscript version arising from this submission.}
\thanks{\orcidlinkc{0000-0003-3825-3313}}
\keywords{Quasianalytic functions, Denjoy--Carleman classes, B\'ezout bounds, intersection with algebraic curves, rational points}
\subjclass[2020]{ 
  Primary 
    14G05,   
    26E10;  	
  Secondary 
    26D05,   
41A17}      

\date{October 15, 2026}

\maketitle



\section{Introduction}

Bounding the maximal number of intersection points between the graph $\Ga(f)$ of a transcendental complex analytic function $f$ on a bounded disk 
and complex algebraic curves of degree at most $d$, has attracted considerable interest.  
The case $f(z)=e^z$ was treated by Tijdeman \cite{Tijdeman:1971aa}, and the general case was studied in \cite{Coman:2003aa}, 
\cite{Coman:2007aa}, and \cite{Brudnyi:2008aa}. 
In \cite{Comte:2018aa}, such bounds were called \emph{B\'ezout bounds for $f$}, and sufficient conditions were given,
in the complex analytic setting, for the existence of
B\'ezout bounds that are polynomial in $d$, together with Diophantine applications.

In this paper, we establish B\'ezout bounds for functions in quasianalytic Denjoy--Carleman classes $\cC_M$, i.e.,
regularity classes of $\cC^\infty$ functions  with the unique continuation property, determined by bounds on the infinite sequence of derivatives in terms 
of a weight sequence $M=(M_j)$. For $M= (j!)$, we recover the real analytic class, 
in general, $\cC_M$ is strictly larger.
Let $f : [0,1] \to [0,1]$ be a $\cC_M$ function and $p \in \R[X,Y]\setminus \{0\}$ a real polynomial in two variables. 
If $p(x,f(x))$ is not identically zero, then the number of zeros of $p(x,f(x))$, $x \in [0,1]$, (counted with multiplicities) 
is finite; we denote this number by $\zz(p,f)$. We will see that also
\begin{equation}
    \zz_d(f)  := \sup_{p \in \cP_2^d \setminus \{0\}} \zz(p,f)
\end{equation}
is a finite integer, provided that $f$ is transcendental, i.e., $p(x,f(x))$ is not identically zero for all nonzero $p \in \R[X,Y]$.
We denote by $\cP_n^d = \R[X_1,\ldots,X_n]_d$ the space of real polynomials in $n$ variables of maximal degree $d$.

Our B\'ezout bounds will be given in terms of the \emph{Bang degree}, a natural complexity parameter for functions in quasianalytic 
Denjoy--Carleman classes determined by the weight sequence $M$ and the size of the function. 
It was introduced by Bang \cite{Bang53} and used e.g.\ in \cite{NazarovSodinVolberg04}, \cite{Rainer:2022aa}, and \cite{Rainer:2026aa}.
We obtain an explicit bound for $\zz(p,f)$ which depends only on the class $\cC_M$ of $f$, the degree $d$ of $p$, and the ratio 
$\|p\|_{[0,1]^2}/\|p\|_{\Ga(f)}$ (\Cref{t:Bezout1}), where $\|p\|_K := \sup_{x\in K} |p(x)|$.
The proof easily generalizes to give a similar bound for the number of zeros of $p(x,f_1(x),\ldots,f_n(x))$, $x \in [0,1]$,  
for $\cC_M$ functions $f_i : [0,1] \to [0,1]$, $1 \le i \le n$, and $p \in \cP_{n+1}^d$, but this is left to the reader. 

The number 
\begin{equation}
    \ee_d(f)  := \sup_{p \in \cP_2^d \setminus \{0\}} \log \Big(\frac{\|p\|_{[0,1]^2}}{\|p\|_{\Ga(f)}}\Big)
\end{equation}
is another interesting transcendence measure of $f$; $\ee_d(f)$ is a finite nonnegative number, provided that $f$ is transcendental. 
In the terminology of \cite{BRUDNYI:2015vu}, 
$\ee_d(f)$ is the logarithm of the $\cP^d_2$-norming constant of $\Ga(f)$. Note that $\ee_d(f)$ encodes the best constant 
of a Remez inequality for $p \in \cP_2^d$ on a subset $\Ga(f)$ of $[0,1]^2$ of zero measure.

We conclude that 
\begin{equation}
    \zz_d(f) \le F_M(2d\log d + \ee_d(f)), \quad d \ge 1,
\end{equation}
where $F_M$ is an explicit function depending only on the weight sequence $M$ (\Cref{c:zded}).
For instance, for the analytic class $M=(j!)$ the function $F_M$ grows linearly, and if $f : [0,1] \to [0,1]$ is 
transcendental and the restriction of an entire function, then we find
\begin{equation}
    \zz_d(f) \le 12\, d\log d + 6\, \ee_d(f) + C, \quad d \ge 1,
\end{equation}
for a constant $C=C(f)$ (\Cref{c:en}). In the framework of entire functions,
similar connections have been established in \cite{Coman:2003aa}, 
\cite{Coman:2007aa}, and \cite{Brudnyi:2008aa}. 

We estimate $F_M$ for a one-parameter family of quasianalytic weights, the so-called \emph{Denjoy weights}, and identify 
precisely the range of parameters for which $F_M$ is bounded by a polynomial (\Cref{p:Denjoy}).
In particular, $F_{M_s}$ is bounded by a polynomial for all $M_s = (j!\, (\prod_{i=1}^j \log(e i))^s)_j$ with $s\in [0,1]$.
Further away from the analytic class, $F_M$ grows like higher and higher towers of exponentials.

Polynomial B\'ezout bounds for $f$ have a striking Diophantine consequence (see \cite{Pila:2006ab}):  
the number of rational points of height at most $H$ in $\Ga(f)$ is bounded by a polynomial in $\log H$. 
Thus, it is interesting to ask when $\ee_d(f)$ is bounded by a polynomial in $d$, 
especially, for transcendental $f : [0,1] \to [0,1]$ of class $\cC_{M_s}$, where $s \in [0,1]$. 
Even in the real analytic class ($s=0$), $\ee_d(f)$ can grow superpolynomially, see e.g.\ \cite[Remark 5.3]{Comte:2018aa} or \Cref{r:sp},
while $\ee_d(f)$ is bounded by a polynomial for exponential polynomials $f$, see \cite[Section 2.7]{BRUDNYI:2015vu}. 

It seems to be difficult to identify non-analytic quasianalytic functions $f$ such that $\ee_d(f)$ is at most polynomial in $d$.
(Note that $\Ga(f)$ is always pluripolar, see \cite{Coman:2005aa}.)
However, we provide a family $\{f_\xi\}$ of functions $f_\xi : [0,1] \to [0,1]$ of class $\cC_M$, parameterized by a 
sequence $\xi = (\xi_n)_{n\ge 1}$ of independent random variables, each uniformly distributed in $[-1,1]$, such that 
$f_\xi$ is almost surely not real analytic and transcendental, and $\ee_d(f)$ is almost surely at most polynomial in $d$ (\Cref{p:as}). 
Interestingly, the proof of this result does not use quasianalyticity, 
but clearly it requires that the class $\cC_M$ contains the real analytic functions as a proper subclass.

As a consequence, specializing to $M=M_s$ for $s \in [0,1]$, 
the number of rational points of height at most $H$ in $\Ga(f_\xi)$ is almost surely bounded by a polynomial in $\log H$ (\Cref{t:rp}). 
In contrast, for a general function $f : [0,1] \to [0,1]$ of class $\cC_{M_s}$,
the number of rational points of height at most $H$ in $\Ga(f)$ is bounded by $C_\ep H^\ep$ for every $\ep>0$, 
by the Pila--Wilkie theorem \cite{Pila:2006aa}, and this bound is sharp for the class of real analytic functions.
Indeed, the expansion of the real field by restricted functions of class $\cC_{M_s}$ is o-minimal, see \cite{RolinSpeisseggerWilkie03}.

The paper is organized as follows. In \Cref{sec:weights}, we recall basics on quasianalytic Denjoy--Carleman classes
and the Bang degree. In \Cref{sec:intersect}, we prove our bound for $\zz(p,f)$. 
In \Cref{sec:norming}, we investigate the relation of this B\'ezout bound for $f$ with $\ee_d(f)$, 
the (logarithm of the) $\cP_2^d$-norming constant of $\Ga(f)$.
Finally, in \Cref{sec:rational}, we find instances of polynomial B\'ezout bounds for quasianalytic functions and 
discuss the consequences for the number of rational points on corresponding function graphs.

\section{Weights, quasianalytic functions, and the Bang degree} \label{sec:weights}

We recall basics on quasianalytic Denjoy--Carleman classes and introduce our terminology.

\subsection{Quasianalytic Denjoy--Carleman classes}

Let $M=(M_j)_{j\ge 0}$ be a sequence of positive reals. The \emph{Denjoy--Carleman class} $\cC_M$ is defined as follows.
For open $U \subseteq \R^n$, $\cC_M(U)$ consists of all $f \in \cC^\infty(U)$ such that for all compact $K \subseteq U$ 
there are constants $C,\rh>0$ such that 
\begin{equation} \label{eq:DC}
    \|\p^\al f\|_K \le C \rh^{|\al|} M_{|\al|}, \quad \al \in \N^n.
\end{equation}
It will be convenient for us to write 
\begin{equation}
    M_j = M_0 \prod_{i = 1}^j \mu_i, \quad j \ge 1, 
\end{equation}
where $\mu_j = \frac{M_j}{M_{j-1}}$. We will require that the $\mu_j$ are increasing.

\begin{definition}[Admissible weights]
    An \emph{admissible weight} is an infinite increasing sequence $\mu^{|\infty}=(\mu_j)_{j \ge 1}$ of elements in $\R_{>0}$, i.e.,
    $0< \mu_1 \le \mu_2 \le \cdots$.
\end{definition}

A pair $(\mu^{|\infty},M_0)$, where $\mu^{|\infty}$ is an admissible weight and $M_0>0$, uniquely determines $M=(M_j)_{j\ge 0}$ 
which satisfies the logarithmic convexity property 
\begin{equation}
    M_j^2 \le M_{j-1}M_{j+1},\quad j \ge 1,
\end{equation}
and vice versa.

\begin{definition}[Quasianalytic weights]
    An admissible weight $\mu^{|\infty} = (\mu_j)_{j \ge 1}$ is called \emph{quasianalytic} if 
    \begin{equation} \label{eq:qa}
    \sum_{j \ge 1}  \frac{1}{\mu_j} = \infty.
\end{equation}
\end{definition}

By the Denjoy--Carleman theorem, the class $\cC_M$ is quasianalytic (i.e., the infinite Taylor series homomorphism 
$\cC_M(U) \to \R\a{X_1,\ldots,X_n}$ is injective at any point of $U$, provided that $U$ is connected) if and only if \eqref{eq:qa} holds.
See \cite{Rainer:2021aa} for a comprehensive survey of Denjoy--Carleman classes.

We will work with the following definition.

\begin{definition}[$(\mu^{|N},M_0)$-smooth functions]
    Let $\mu^{|\infty} = (\mu_j)_{j\ge 1}$ be an admissible weight and $M_0>0$.
    Let $N \in \N_{\ge 1} \cup \{\infty\}$.
    A function $f : I \to \R$, defined on a compact interval $I \subseteq \R$,
    is called \emph{$(\mu^{|N},M_0)$-smooth} if $f$ is of class $\cC^{N}$ 
    and 
    \begin{equation}
        \|f^{(j)}\|_{I} \le M_j, \quad 0 \le j < N +1.
    \end{equation}
\end{definition}

Note that, compared to \eqref{eq:DC}, this is no restriction because the constants $C,\rh$ can be absorbed by redefining $M$.

\subsection{The Bang degree \texorpdfstring{$\nn$}{N}}

We define an integer $\nn$ that plays the role of a complexity parameter for $(\mu^{|N},M_0)$-smooth functions.
It is a variant of an invariant introduced by \cite{Bang53}, later called \emph{Bang degree} by \cite{NazarovSodinVolberg04}.

In the following, $\lceil r \rceil_\N$ denotes the smallest nonnegative integer $j$ satisfying $j\ge r$.

\begin{definition} \label[d]{d:nn}
    Let $\mu^{|\infty}= (\mu_j)_{j\ge 1}$ be an admissible weight and $M_0,a,b>0$.
    We say that $(\mu^{|\infty},M_0,a,b)$ is \emph{admissible}, if 
    \begin{equation} \label{eq:condition}
        \sum_{j=j_0+1}^\infty \frac{1}{\mu_j} > a\, e, \quad \text{ where }  j_0 = \lceil \log (\tfrac{M_0}{b})\rceil_{\N}.
    \end{equation}
    In that case, we define the integer
    \begin{equation} \label{eq:nn}
        \nn= \nn(\mu^{|\infty},M_0,a,b):= 2\, \sup\Big\{n \in \N : \sum_{j=j_0+1}^n \frac{1}{\mu_j} < a\, e\Big\}
    \end{equation}
    and say that $\nn$ is the \emph{degree associated with $(\mu^{|\infty},M_0,a,b)$}.
\end{definition}

If $\mu^{|\infty}$ is quasianalytic, then \eqref{eq:condition} is always satisfied, for all choices of $M_0,a,b>0$. 
But some of our results (e.g.\ \Cref{t:Bezout1}) hold also for non-quasianalytic weights, provided that 
\eqref{eq:condition} is satisfied.

\begin{remark} \label[r]{r:nnprop}
    (1) By definition, for $r,s>0$, $(r\, \mu^{|\infty},s\, M_0,a,b)$ is admissible if and only 
    if $(\mu^{|\infty},M_0,r\, a, \frac{b}{s})$ is admissible,
    and
    \[
        \nn(r \, \mu^{|\infty},s\, M_0,a,b) = \nn(\mu^{|\infty},M_0,r\, a, \tfrac{b}{s}).
    \]

    (2) Assuming admissibility, the integer $\nn(\mu^{|\infty},M_0,a,b)$ is increasing in $a$ and decreasing in $b$.
\end{remark}

\section{On the number of intersection points with algebraic curves} \label{sec:intersect}

The goal of this section is \Cref{t:Bezout1} which provides a bound for the number of intersection points of the graph 
of $\Ga(f)$ of an $(\mu^{|N},M_0)$-smooth function $f : [0,1] \to [0,1]$ with an algebraic curve in terms of the 
Bang degree and certain data of the algebraic curve.

We need an additional assumption on the admissible weight $\mu^{|\infty} = (\mu_j)_{j\ge 1}$: we will assume that  
    the associated sequence 
    \begin{equation} \label{eq:aj}
        a_j := \Big(\frac{\mu_1 \cdots \mu_j}{j!}\Big)^{1/j}
    \end{equation}
    is almost increasing with constant $H\ge 1$, i.e., 
\begin{equation} \label{eq:ai}
    a_i \le H \, a_j\quad \text{ if } i \le j.  
\end{equation}

For convenience,
given $M_0>0$ and hence $M_1 = M_0\mu_1$, we put
    \begin{equation} 
        \ol m := M_0\sqrt{1+\tfrac{1}{M_1^2}}.
    \end{equation}

\begin{theorem} \label[t]{t:Bezout1} 
    Let $\mu^{|\infty} = (\mu_j)_{j\ge 1}$ be an admissible weight satisfying \eqref{eq:ai} with constant $H\ge 1$. 
    Let $M_0>0$, $B_0\ge b_0 > 0$, and $d \in \N_{\ge 1}$.
    Assume that $(\mu^{|\infty},\tfrac{1}{2} e^{4H\ol m}\, d^{2d} B_0,2,b_0)$ is admissible and let 
    \[ 
        \nn = \nn(\mu^{|\infty},\tfrac{1}{2} e^{4H\ol m} d^{2d} B_0,2,b_0).
    \]
Let $f : [0,1] \to [0,1]$ be $(\mu^{|\nn},M_0)$-smooth 
and  
$p \in \cP_2^d$ satisfy $\|p\|_{\Ga(f)} \ge b_0$ and $\|p\|_{[0,1]^2} \le B_0$.
Then 
\begin{equation}
    \zz(p,f)\le \nn,
\end{equation}
where $\zz(p,f)$ denotes the total number of zeros of $p(x,f(x))$, for $x \in [0,1]$, counted with multiplicities. 
\end{theorem}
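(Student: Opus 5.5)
Set $g(x):=p(x,f(x))$ for $x\in[0,1]$. The plan is to show that $g$ is itself $(\nu^{|\nn},G_0)$-smooth for an explicit admissible weight $\nu$ comparable to $\mu$ and an explicit $G_0$. Then we apply a Bang-type zero-counting estimate to $g$: a $(\mu^{|N},M_0)$-smooth function with $\|g\|_I\ge b$ has at most $\nn(\mu^{|\infty},M_0,a,b)$ zeros counted with multiplicity, where $a$ is tied to the length of the interval. Concretely, the steps are the following.

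\textbf{Step 1: the estimate for the number of zeros.} If $g$ had more than $\nn$ zeros with multiplicity on $[0,1]$, then repeated Rolle gives, for each $j\le \nn$, a point where $g^{(j)}$ vanishes. Integrating from these points yields the standard Bang chain
\[
  \|g^{(j)}\|\le \|g^{(j+1)}\|\cdot(\text{gap length}).
\]
Optimizing the gaps against $\|g^{(j)}\|\le G_j$, together with $\log(G_0/b_0)$ controlling the first $j_0$ steps, gives $\sum_{j_0<j\le \nn/2} 1/\nu_j \ge a e$. This contradicts the definition of $\nn$; the factor $2$ in \eqref{eq:nn} absorbs the halving in this argument. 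I would reprove this carefully in the form needed, with $a=2$.

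\textbf{Step 2: derivative bounds for the composite.} We have $g^{(j)}=\sum_{k} \partial_Y^k\partial_X^{l}p\cdot(\text{Faà di Bruno terms in }f)$. By Markov's inequality on $[0,1]^2$, each coefficient satisfies $|\partial^\alpha p|\le d^{2|\alpha|}\|p\|_{[0,1]^2}$, and $d^{2|\alpha|}\le d^{2d}$ since $|\alpha|\le d$, so all such terms are bounded by $d^{2d}B_0$. For the powers and compositions in $f$ we use the almost-increasing property \eqref{eq:ai} of $a_j$. This is the standard ingredient making $\cC_M$ stable under composition. Via a generating-function (majorant series) argument it gives
\[
  \|(f^k)^{(j)}\|\le e^{4H\ol m}\,\mu_1\cdots\mu_j,
\]
where $\ol m$ accounts for $M_0$ and $M_1$ through the curve $x\mapsto(x,f(x))$, whose speed is $\le\sqrt{1+M_1^2}$. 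Hence $\|g^{(j)}\|\le \tfrac12 e^{4H\ol m}d^{2d}B_0\,\mu_1\cdots\mu_j$, possibly after some bookkeeping with $\mu$ rescaled by a factor, which \Cref{r:nnprop}(1) converts into the parameter $a=2$.

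\textbf{Step 3: conclusion.} The hypothesis $\|p\|_{\Ga(f)}\ge b_0$ gives $\|g\|_{[0,1]}\ge b_0$. Step 1 applied with $G_0=\tfrac12 e^{4H\ol m}d^{2d}B_0$ then yields $\zz(p,f)\le\nn$.

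\textbf{Main obstacle.} The main difficulty is Step 2: obtaining the composition estimate with exactly the constant $e^{4H\ol m}$ and with no loss in $\mu$ beyond a scaling absorbable into $a$. I would first try the majorant method. We bound $f-f(x_0)$ by the series $\sum_j M_j t^j/j!$ and use $a_i\le Ha_j$ to show that powers of this series are dominated by $\exp$-type factors times the original weights.
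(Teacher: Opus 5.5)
Your architecture matches the paper's. You show that $h(x)=p(x,f(x))$ is $(2\,\mu^{|\nn},\tfrac12 e^{4H\ol m}d^{2d}B_0)$-smooth, apply the Bang bound on $I=[0,1]$, and turn the factor $2$ in the weight into $a=2$ via \Cref{r:nnprop}(1). The Bang bound is \Cref{p:zeros}, which you can cite instead of reproving; it yields $a=|I|=1$. However, the one step with real content, the composition estimate, is left open, and the pieces you do state would not give it.

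First, Markov's inequality on $[0,1]$ gives $\|\p^\al p\|_{[0,1]^2}\le(2d^2)^{|\al|}\|p\|_{[0,1]^2}$, not $d^{2|\al|}$; consider $T_d(2x-1)$ with $T_d$ the Chebyshev polynomial. Hence \Cref{l:Markov} gives $\|d^jp\|\le(4d^2)^j\|p\|_{[0,1]^2}$. This $4^j$ is exactly where the $4$ in $e^{4H\ol m}$ comes from, so it is not available a second time for the $f$-part.

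Second, and more seriously, your bound $\|(f^k)^{(j)}\|\le e^{4H\ol m}\mu_1\cdots\mu_j$, uniform in $k\le d$, is false. Take $f(x)=x$, $M_0=1$, $\mu_j=j$, so $H=1$ and $\ol m=\sqrt2$. Then $\|(f^k)^{(j)}\|=k!/(k-j)!$, which exceeds $e^{4\sqrt2}\,j!$ as soon as $\binom{k}{j}>e^{4\sqrt2}\approx 286$. It still exceeds $e^{4\sqrt2}\,2^j j!$ for $k=2j$ large. Growth in the degree must be charged to $d^{2d}$, not to the weight.

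Third, ``possibly after some bookkeeping with $\mu$ rescaled'' is precisely what has to be proved. Since $\nn$ is increasing in $a$ and in $M_0$, the rescaling factor must be at most $2$ and the constant at most $\tfrac12e^{4H\ol m}d^{2d}B_0$. Otherwise you only get a weaker bound than the one stated.

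The missing argument is short, and the first formula of your Step 2 is the right start. Apply Fa\`a di Bruno (\Cref{p:FdBop}) to $p\o\ol f$ with $\ol f(x)=(x,f(x))$, keeping $d^jp$ and the products of derivatives of $\ol f$ separate; do not pass to powers $f^k$. Only $j=j_1+\cdots+j_k\le d$ contributes. Use $|\ol f'|\le M_1\sqrt{1+1/M_1^2}$, $|\ol f^{(i)}|\le M_i$ for $i\ge2$, and \Cref{l:rai}, which is where \eqref{eq:ai} enters. This gives $\prod_i(|\ol f^{(i)}|/i!)^{j_i}\le(H\ol m)^j\mu_1\cdots\mu_k/k!$.

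The key trick is $(4d^2H\ol m)^j\le d^{2d}e^{4H\ol m}j!$. It converts $\sum 1/(j_1!\cdots j_k!)$ into $\sum j!/(j_1!\cdots j_k!)=2^{k-1}$, the number of compositions of $k$. The result is $\|h^{(k)}\|\le\tfrac12e^{4H\ol m}d^{2d}\|p\|_{[0,1]^2}\,2^k\mu_1\cdots\mu_k$, which is exactly what \Cref{p:zeros} needs.

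Your majorant idea can reproduce this if it is run on $\ol f$ together with the Markov weights. The coefficient of $t^k$ in $\exp\bigl(x\sum_{i\ge1}t^i\bigr)=\exp(xt/(1-t))$ is $\sum_j\binom{k-1}{j-1}x^j/j!\le2^{k-1}e^x$. Apply this with $x=4H\ol m$, after pulling out $d^{2j}\le d^{2d}$.
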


Observe that in the theorem $f$ is only assumed to be of class $\cC^\nn$ and to satisfy $\|f^{(j)}\|_{[0,1]} \le M_j$ for $j \le \nn$. 
The proof of \Cref{t:Bezout1} is based on the following proposition which is a variant of a result of Bang \cite{Bang53}.

\begin{proposition}[{\cite[Corollary 3.6 and Remark 3.7]{Rainer:2022aa}}] \label[p]{p:zeros}
   Let $I \subseteq \R$ be a compact interval.
   Let $\mu^{|\infty} = (\mu_j)_{j \ge 1}$ be an admissible weight and $M_0>b_0>0$.
   Assume that $(\mu^{|\infty},M_0,|I|,b_0)$ is admissible and let 
    \[ 
        \nn = \nn(\mu^{|\infty},M_0,|I|,b_0).
    \]
   Let $f : I \to \R$ be a $(\mu^{|\nn}, M_0)$-smooth function satisfying $\|f\|_I \ge b_0$.
   Then the total number of zeros of $f$ in $I$ (counted with multiplicities) is finite and 
   bounded by $\nn$.
\end{proposition}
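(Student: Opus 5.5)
The plan is to reproduce Bang's argument. It has three parts: a Rolle-type propagation of zeros to derivatives, Bang's inequality bounding a sum of ratios of consecutive derivative norms by $e|I|$, and a comparison of that sum with $\sum 1/\mu_j$ using only $\|f^{(j)}\|_I\le M_j$ and $\|f\|_I\ge b_0$. Write $F_j:=\|f^{(j)}\|_I$, so that $F_j\le M_j$ for $j\le \nn$ and $F_0\ge b_0$.

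\emph{Step 1 (propagation of zeros).} Suppose, for contradiction, that $f$ has at least $k$ zeros in $I$ counted with multiplicity, where $k>\nn$, $k\le \nn+1$; we may take $k=\nn+1$, which is all the $\cC^{\nn}$ regularity allows. We also note that $f$ cannot vanish to infinite order, since then $F_0=0<b_0$ by the argument below, so finiteness follows once the bound is proved. By Rolle's theorem, counting multiplicities, $f^{(j)}$ has at least $k-j$ zeros in $I$ for every $j<k$. In particular, each $f^{(j)}$ with $j<k$ vanishes somewhere in $I$.

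\emph{Step 2 (Bang's inequality).} The main step is an estimate of the form
\[
\sum_{j\in J}\frac{F_{j-1}}{F_j}\le e\,|I|,
\]
where $J$ is an interval of indices of length about $k/2$, starting above $\log(F_0/F_n)$-type threshold; the factor $2$ in the definition of $\nn$ reflects this halving. I would prove it as Bang does. Since $f^{(j)}$ has many zeros, one repeatedly integrates from a zero, using $|f^{(j-1)}(x)|\le F_j\,\mathrm{dist}(x,Z_{j-1})$. Tracking how the zeros of consecutive derivatives interlace then yields a telescoping estimate. A weighted AM--GM (the source of the constant $e$) turns the product bound $\prod F_{j-1}/F_j\cdots$ into the stated sum bound. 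This is where I expect the real difficulty. The delicate points are choosing the index range so that the first roughly $\log(F_0/\cdot)$ indices are discarded, and handling multiple zeros. I would follow the presentation in Bang and in Nazarov--Sodin--Volberg, arguing by induction on the number of zeros.

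\emph{Step 3 (comparison with the weight).} Next I replace the ratios $F_{j-1}/F_j$ by $1/\mu_j$. Since $F_0\ge b_0$ and $F_j\le M_j=M_0\mu_1\cdots\mu_j$, we have
\[
\prod_{i=1}^{j}\frac{F_{i-1}}{F_i}\ge \frac{b_0}{M_0}\prod_{i=1}^{j}\frac1{\mu_i}.
\]
Because $\mu$ is increasing, a rearrangement (Carleman-type) argument shows the following: after discarding the first $j_0=\lceil\log(M_0/b_0)\rceil_\N$ indices, which absorb the loss factor $b_0/M_0$ at cost $e^{-1}$ per index, one gets
\[
\sum_{j=j_0+1}^{k/2}\frac1{\mu_j}\le \sum_{j\in J}\frac{F_{j-1}}{F_j}.
\]
This is where the choice of $j_0$ enters.

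\emph{Step 4 (conclusion).} Combining Steps 2 and 3 gives $\sum_{j=j_0+1}^{k/2}1/\mu_j\le e|I|$ with $k/2>\nn/2$. This contradicts the definition of $\nn=\nn(\mu^{|\infty},M_0,|I|,b_0)$, which is twice the largest $n$ with $\sum_{j_0+1}^n 1/\mu_j<e|I|$; the boundary case of equality needs a strict inequality in Step 2. Hence $f$ has at most $\nn$ zeros. Admissibility guarantees that $\nn$ is finite, so the argument is meaningful.
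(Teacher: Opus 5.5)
\textbf{There is a genuine gap: Step~2 carries the entire content of the proposition, it is not proved, and the route you sketch for it cannot work.}

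The paper gives no proof of its own here; it imports Bang's zero estimate from Rainer's earlier paper. Your outline has a reasonable overall shape, and Step~3 is sound provided the sum in Step~2 runs over \emph{all} indices $j=1,\dots,N$. Put $t_i:=\mu_iF_{i-1}/F_i$. Then $\prod_{i\le j}t_i=F_0M_j/(F_jM_0)\ge e^{-j_0}$, so AM--GM gives $t_1+\dots+t_j\ge je^{-j_0/j}\ge j-j_0$. Summation by parts against the nonincreasing $1/\mu_i$ then gives $\sum_{i=1}^{N}F_{i-1}/F_i\ge\sum_{i=j_0+1}^{N}1/\mu_i$.

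Two problems in how you set this up:
\begin{itemize}
\item If, as you write, the index set in Step~2 already starts above a threshold, this comparison is lost, because the only lower bound you have is on $F_0$.
\item Step~4 needs Step~2 with exactly $N=\nn/2+1$ indices coming from $\nn+1$ zeros, and with strict inequality. ``Length about $k/2$'' is not enough.
\end{itemize}

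\textbf{Why the sketched mechanism fails.} Integrating from zeros of successive derivatives means using $|f^{(j-1)}(x)|\le F_j\,\mathrm{dist}(x,Z_{j-1})$ and its iterates (Hermite's remainder). This only yields upper bounds on \emph{products} of consecutive ratios: $\prod_{i=j+1}^{j+m}F_{i-1}/F_i\le|I|^m/m!$ whenever $f^{(j)}$ has $m$ zeros. AM--GM turns lower bounds on products into lower bounds on sums; that is why it works in Step~3. It never turns an upper bound on a product into an upper bound on a sum. In fact no argument using only these product bounds can succeed. The ratios $F_{i-1}/F_i=|I|/i$ satisfy all of them, since $j!/(j+m)!\le 1/m!$, yet $\sum_{i\le N}|I|/i>e|I|$ for $N\ge 9$.

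\textbf{What is missing.} You need an argument that uses all the derivative bounds simultaneously, with the log-convex weights $M_j$ themselves. The classical device is Bang's auxiliary function
\[
B(x)=\max_{0\le j\le k}\frac{|f^{(j)}(x)|}{e^jM_j}.
\]
\begin{itemize}
\item Where $-i-1<\log B<-i$, every maximizing index satisfies $j\le i$, because $|f^{(j)}|\le M_j$.
\item Comparing with index $j+1$ gives $|f^{(j+1)}|\le e\mu_{j+1}|f^{(j)}|\le e\mu_{i+1}|f^{(j)}|$ there. So $\log B$ needs length at least $1/(e\mu_{i+1})$ to rise by one unit.
\item At a maximum point of $|f|$ one has $\log B\ge\log(b_0/M_0)\ge -j_0$. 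Hence every point with $\log B\le -k$ lies at distance at least $\frac1e\sum_{i=j_0+1}^{k}1/\mu_i$ from it. A zero of multiplicity $k$ is such a point.
\end{itemize}
This is where the constant $e$, the monotonicity of $\mu$, and the threshold $j_0$ really enter. If you use the raw, non-log-convex norms $F_j$ in place of $M_j$, the same argument only gives length $\ge \frac1e\min_{1\le j\le i+1}F_{j-1}/F_j$ per unit. So a Step~2 phrased purely in the $F_j$ is not what this device delivers.

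\textbf{The remaining step.} The device bounds the multiplicity of a single zero by about $\nn/2$. The factor $2$ in $\nn$ pays for zeros spread over $I$. Converting $\nn+1$ possibly distinct zeros into an estimate of this type is a further step, and your proposal does not address it beyond invoking Rolle's theorem.
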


In view of \Cref{p:zeros}, 
it suffices to show that
the composite  function $h(x) = p(x,f(x))$, $x \in [0,1]$, is 
    $(2\, \mu^{|\nn}, \tfrac{1}{2} e^{4 H \ol m} d^{2d} B_0)$-smooth. 
To this end, we will prove \Cref{p:composition}.

\begin{proposition} \label[p]{p:composition}
    Let $\mu^{|\infty} = (\mu_j)_{j\ge 1}$ be an admissible weight satisfying \eqref{eq:ai} with constant $H\ge 1$. 
    Let $M_0>0$.
    If $f : [0,1] \to [0,1]$ is $(\mu^{|N},M_0)$-smooth and $p \in \cP_2^d$, 
    then $h := p \o \ol f : [0,1] \to \R$, where $\ol f : [0,1] \to [0,1]^2$, $\ol f(x) = (x,f(x))$, 
    is $(2\, \mu^{|N}, \tfrac{1}{2} e^{4H\ol m}\,  d^{2d}\, \|p\|_{[0,1]^2})$-smooth. 
\end{proposition}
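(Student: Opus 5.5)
Write $h(x)=p(x,f(x))$ and set $P:=\|p\|_{[0,1]^2}$. The target bound is
\[
\|h^{(j)}\|_{[0,1]}\le \tfrac12 e^{4H\ol m}d^{2d}P\,2^j\mu_1\cdots\mu_j, \qquad 0\le j\le N .
\]
The case $j=0$ is trivial, since $\|h\|\le P$ and $\tfrac12 e^{4H\ol m}d^{2d}\ge 1$ for $d\ge1$. The plan has three steps: bound the partial derivatives of $p$ on $[0,1]^2$ in terms of $P$, differentiate the composite $p\o\ol f$ with Faà di Bruno's formula, and absorb the combinatorial sum using the almost-monotonicity \eqref{eq:ai} of $a_j$.

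\textbf{Step 1 (Markov).} Apply the Markov inequality for $[0,1]$ in each variable separately; it gives $\|\partial_x q\|_{[0,1]}\le 2d^2\|q\|_{[0,1]}$ for univariate $q$ of degree at most $d$. Hence
\[
\|\p^\al p\|_{[0,1]^2}\le (2d^2)^{|\al|}P .
\]
Moreover $\p^\al p=0$ once $\al_1>d$ or $\al_2>d$, so only $|\al|\le 2d$ matters. This yields the uniform bound $\|\p^\al p\|\le (2d^2)^{|\al|}P\le 2^{2d}d^{4d}P$. I would rather use the cleaner bound obtained through Taylor coefficients: expanding around points of $[0,1]$ and using Chebyshev-type coefficient bounds gives $\|\p^\al p\|/\al!\lesssim$ (something like $d^{2d}$ up to constants)$\cdot P$. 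This is where the factor $d^{2d}$ should come from.

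\textbf{Step 2 (Faà di Bruno for $\ol f$).} The map $\ol f=(x,f(x))$ has $\|\ol f'\|\le\sqrt{1+M_1^2}$ and $\|\ol f^{(j)}\|\le M_j$ for $j\ge2$. Using the multivariate Faà di Bruno formula in the form
\[
\frac{h^{(j)}}{j!}=\sum_{\al}\frac{\p^\al p(\ol f)}{\al!}\sum \prod_i\frac{\ol f_{k_i}^{(\ell_i)}}{\ell_i!},
\]
I will bound $|\ol f^{(\ell)}|/\ell!$ by $\ol m\, a_\ell^\ell$-type quantities. The choice $\ol m=M_0\sqrt{1+1/M_1^2}$ handles the first derivative, where the $x$-component contributes $1=M_1\cdot\frac1{M_1}$. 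For a term with $k=|\al|$ factors and $\ell_1+\cdots+\ell_k=j$, the estimate \eqref{eq:ai} gives
\[
\prod a_{\ell_i}^{\ell_i}\le H^j a_j^j=H^j\mu_1\cdots\mu_j/j! .
\]
The step I expect to need care is getting the power $H^k$ (one factor per block) rather than $H^j$, since the final constant $e^{4H\ol m}$ involves $H$ only linearly in the exponent. The fix is to use $a_{\ell}\le H a_j$ only once per factor: $a_{\ell_i}^{\ell_i}\le (Ha_j)^{\ell_i}$ is wrong. Instead, use log-convexity of $M$ to reduce to $\prod M_{\ell_i}\le M_0^{k-1}M_{j-k+1}$-type bounds, and apply \eqref{eq:ai} only to compare $a_{j-k+1}$ with $a_j$.

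\textbf{Step 3 (summation).} Counting compositions of $j$ into $k$ parts gives $\binom{j-1}{k-1}\le 2^{j}$, which produces the factor $2^j$. Summing over $k$ the quantities $(4H\ol m)^k/k!\le e^{4H\ol m}$, with the factor $4$ coming from the two coordinates and the binomial slack, produces the exponential. The polynomial side contributes at most $d^{2d}P$ via Step 1, and the factor $\tfrac12$ is absorbed by the slack in $\binom{j-1}{k-1}\le 2^{j-1}$.

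The main obstacle is the bookkeeping in Step 2: arranging the estimate so that $H$ and $\ol m$ enter only through $e^{4H\ol m}$ and the Markov factor only as $d^{2d}$. If the sharp form fails, I would first try bounding $\p^\al p/\al!$ crudely by $\binom{d}{\al_1}\binom{d}{\al_2}$ times the coefficient bound and then summing.
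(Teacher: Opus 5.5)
\textbf{Overall structure.} Your architecture matches the paper's proof: Fa\`a di Bruno for $p\o\ol f$, a Markov bound for the derivatives of $p$, control of $\prod_i M_{\ell_i}/\ell_i!$ through \eqref{eq:ai}, then $(cH\ol m)^k\le e^{cH\ol m}k!$ and $\sum_k\binom{j-1}{k-1}=2^{j-1}$. In your notation, $j$ is the order of $h^{(j)}$ and $k=|\al|$ is the number of blocks.

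\textbf{Step 1.} Here one correction is needed. $p\in\cP_2^d$ has \emph{total} degree at most $d$, so $\p^\al p=0$ for $|\al|>d$. Hence $k\le d$, and the Markov factor $(d^2)^k\le d^{2d}$ is exactly where $d^{2d}$ comes from. The paper uses \Cref{l:Markov}, $\|d^kp\|\le(4d^2)^k\|p\|_{[0,1]^2}$, together with $(4d^2H\ol m)^k\le d^{2d}e^{4H\ol m}k!$. No Taylor-coefficient argument is needed, and with your $|\al|\le 2d$ you would only get $d^{4d}$.

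Your order-zero claim $\tfrac12e^{4H\ol m}d^{2d}\ge1$ is also false for $d=1$ when $4H\ol m<\log 2$. Take $f\equiv0$, $p=X$, $M_0=10^{-2}$, $\mu_j=100j$: then $H=1$, but $\|h\|_{[0,1]}=1>\tfrac12e^{4\sqrt2/100}$. The paper passes over this too; it is harmless for $d\ge2$.

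\textbf{Step 2: the genuine gap.} You correctly identify this as the crux, but your fix does not close it.
\begin{itemize}
\item As written, $\prod_iM_{\ell_i}\le M_0^{k-1}M_{j-k+1}$ is false: $M_1^2\le M_0M_1$ fails once $\mu_1>1$.
\item The correct log-convexity bound $M_1^{k-1}M_{j-k+1}$ handles only the numerators. But $1/\prod_i\ell_i!$ is largest for balanced compositions, so treating the two separately loses a multinomial factor of order $k^j$.
\item What must be bounded is $\prod_ib_{\ell_i}$ with $b_\ell=a_\ell^\ell=\mu_1\cdots\mu_\ell/\ell!$, and $(b_\ell)$ need not be log-convex under \eqref{eq:ai}.
\end{itemize}

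The paper handles this with \Cref{l:rai}, i.e.\ $\prod_{i=1}^kb_{\ell_i}\le H^kb_j$. However, its first step $b_i\le Hb_j^{i/j}$ follows from \eqref{eq:ai} only when $H=1$. In general one gets only $b_i\le H^ib_j^{i/j}$, which is your $H^j$.

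\textbf{The case $H=1$.} This covers all Denjoy weights, including the analytic one. Here $\prod_ib_{\ell_i}\le b_j$ is immediate, and your outline then goes through along the same lines as the paper's proof.

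\textbf{The case $H>1$.} The obstacle is real, not bookkeeping.
\begin{itemize}
\item \Cref{l:rai} fails: for $\mu=(1,2,3,3,3,3,7,8,9,\ldots)$ one has $H=(40/9)^{1/6}$, but $b_3^2=1>H^2b_6=(9/40)^{2/3}$.
\item The proposition itself fails. Take $\mu_i=m$ for $m<i\le 6m$, $\mu_i=i$ otherwise, and $M_0=1$. Then \eqref{eq:ai} holds with $H=3.1$ for every $m$, and $\ol m=\sqrt2$. Let $f(x)=\frac{m!}{m^m}e^{m(x-1)}$, which satisfies $\|f^{(i)}\|_{[0,1]}=\frac{m!}{m^m}m^i\le M_i$ for all $i$, and let $p=Y^6$.
\item Then $h^{(6m)}(1)=(m!)^6\,6^{6m}$. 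The claimed bound is $\tfrac12e^{4H\ol m}6^{12}\,2^{6m}\,m!\,m^{5m}$, which is smaller by a factor at least $c\,m^{5/2}(3^6e^{-5})^m\to\infty$, with $c>0$ independent of $m$.
\end{itemize}
For general $H$, what the argument actually yields (for $j\ge1$) is $\|h^{(j)}\|\le\tfrac12e^{4\ol m}d^{2d}\|p\|_{[0,1]^2}(2H)^j\mu_1\cdots\mu_j$.
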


We begin with a few preparatory results.

\begin{lemma} \label[l]{l:rai}
    Let $\mu^{|\infty} = (\mu_j)_{j\ge 1}$ be an admissible weight satisfying \eqref{eq:ai} with constant $H\ge 1$. 
     Then the sequence $b_j :=a_j^j$ satisfies  
    \begin{equation}
        b_{j_1}b_{j_2} \cdots b_{j_\ell} \le H^\ell \, b_k 
    \end{equation}
    whenever $j_1 + \cdots + j_\ell = k$ and $j_i \ge 1$ for all $1 \le i \le \ell$. 
\end{lemma}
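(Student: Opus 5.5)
We have $b_j = a_j^j = \mu_1\cdots\mu_j/j!$. The plan is to use the almost monotonicity \eqref{eq:ai} to compare each factor $b_{j_i}$ with a power of $a_k$, and then to absorb the resulting factorials.

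\emph{Step 1: bound each factor by a power of $a_k$.} Since $j_i \le k$, \eqref{eq:ai} gives $a_{j_i} \le H a_k$. Hence
\[
b_{j_i} = a_{j_i}^{j_i} \le H^{j_i} a_k^{j_i}.
\]
Multiplying over $i$ yields $\prod_i b_{j_i} \le H^k a_k^k = H^k b_k$. This has the right shape, but the power of $H$ is $k$ rather than $\ell$, so it is too weak.

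\emph{Step 2: a finer estimate exploiting the factorials.} Write
\[
\prod_i b_{j_i} = \frac{\prod_i \mu_1\cdots\mu_{j_i}}{\prod_i j_i!}.
\]
The numerator is at most $\mu_1\cdots\mu_k$, because the $\mu_j$ are increasing. Indeed, the multiset of indices $\{1,\dots,j_1\}\cup\{1,\dots,j_2\}\cup\cdots$ can be matched injectively and order-preservingly into $\{1,\dots,k\}$ with each index mapped to one at least as large, by shifting the $i$-th block by $j_1+\cdots+j_{i-1}$. This gives
\[
\prod_i b_{j_i} \le \frac{\mu_1\cdots\mu_k}{\prod_i j_i!} = b_k \,\frac{k!}{\prod_i j_i!}.
\]
The multinomial coefficient $k!/\prod_i j_i!$ is large, so this alone also fails. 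This suggests combining the two approaches: use monotonicity of $\mu$ together with \eqref{eq:ai}, applied once per block.

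\emph{Step 3: induction on $\ell$.} The natural route is to prove the two-factor inequality $b_i b_j \le H\, b_{i+j}$ for $i,j\ge1$ and then induct on $\ell$, since $j_1+\cdots+j_{\ell-1}\ge1$. The estimate needed is
\[
b_i b_j = a_i^i a_j^j \le (H a_{i+j})^i\, a_j^j .
\]
Here we would apply \eqref{eq:ai} only to the $i$-block, and would still need $a_j^j \le a_{i+j}^j$ with no extra constant. That fails in general, so instead I would aim for the sharper claim that $b_i b_j \le b_{i+j}$ up to the single factor $H$. Writing out the ratio,
\[
\frac{b_{i+j}}{b_i b_j} = \frac{\mu_{i+1}\cdots\mu_{i+j}}{\mu_1\cdots\mu_j}\cdot\binom{i+j}{i}^{-1}.
\]
So one must show $\binom{i+j}{i} \le H\,\frac{\mu_{i+1}\cdots\mu_{i+j}}{\mu_1\cdots\mu_j}$.

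\emph{The main obstacle.} This inequality is exactly where \eqref{eq:ai} must enter, but \eqref{eq:ai} only controls geometric means: it says $b_i^{1/i} \le H b_{i+j}^{1/(i+j)}$. Raising to the power $i$ gives $b_i \le H^i b_{i+j}^{i/(i+j)}$, which reintroduces $H^i$. My first attempt would therefore be to order the blocks so that the largest $j_i$ is handled by monotonicity alone, and to apply \eqref{eq:ai} in the form $a_{j}\le H a_{k}$ with exponents arranged so that $H$ appears with total power $\ell$. Concretely, I would try to prove
\[
b_{j_1}\cdots b_{j_\ell} \le \prod_i (H^{1/j_i} a_k)^{j_i} = H^\ell b_k,
\]
which requires the refined per-block bound $a_{j_i}^{j_i} \le H a_k^{j_i}$. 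That bound is what must be extracted, possibly after reinterpreting the constant in \eqref{eq:ai}. If it cannot be extracted, I would fall back on the weaker bound $H^k b_k$ from Step 1 and check whether the later applications in the paper tolerate it.
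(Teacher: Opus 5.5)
Your proposal stops short of the stated $H^\ell$ bound, and the step you flag as missing cannot be supplied at all: the lemma is false as written, and the paper's proof breaks at exactly that point. Your Step 1 is correct, and it is the most \eqref{eq:ai} gives. From $a_i\le Ha_j$ you get $b_i=a_i^i\le H^ia_j^i=H^ib_j^{i/j}$, hence $b_{j_1}\cdots b_{j_\ell}\le H^{j_1+\cdots+j_\ell}b_k=H^kb_k$. The paper runs this same one-line argument, but writes $b_i\le H\,b_j^{i/j}$ ``by \eqref{eq:ai}'', dropping the exponent $i$ on $H$. That is precisely your per-block bound $a_{j_i}^{j_i}\le Ha_k^{j_i}$.

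Here is a counterexample. Take the admissible weight $\mu_1=\mu_2=\mu_3=\mu_4=1$ and $\mu_j=j$ for $j\ge5$. Then $b_j=1/j!$ for $j\le4$ and $b_j=1/24$ for $j\ge4$. So $a_j\le a_1=1$ and $a_j\ge a_4=24^{-1/4}$ for all $j$, and \eqref{eq:ai} holds with $H=24^{1/4}$. For $\ell=2$, $j_1=j_2=2$, $k=4$ one gets $b_2^2=\tfrac{1}{4}>24^{-1/2}=H^2b_4$. Since this weight is admissible, combining \eqref{eq:ai} with the monotonicity of $\mu$ (your Steps 2--3) cannot rescue the claim either.

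The failure can be made drastic. With $\mu_j=1$ for $j\le n$ and $\mu_j=j$ for $j>n$, one has $H=(n!)^{1/n}$. A bound of the form $H^{C\ell}b_k$ with $j_1=j_2=n/2$ would force $\binom{n}{n/2}\le(n!)^{2C/n}$, which fails for large $n$ for every fixed $C$. The $H^\ell$ version does hold when $H=1$, or under the stronger hypothesis $b_i\le H\,b_j^{i/j}$ for $i\le j$; that is the ``reinterpretation'' you had in mind.

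So the correct statement is $b_{j_1}\cdots b_{j_\ell}\le H^kb_k$, which your Step 1 proves completely, and falling back on it is the right call. In \Cref{p:composition} the lemma is used with $\ell=j$. With $H^k$ in place of $H^j$, the same computation gives $\|h^{(k)}\|_{[0,1]}\le\tfrac{1}{2}e^{4\ol m}d^{2d}\|p\|_{[0,1]^2}(2H)^k\mu_1\cdots\mu_k$. Thus $h$ is $(2H\mu^{|N},\tfrac{1}{2}e^{4\ol m}d^{2d}\|p\|_{[0,1]^2})$-smooth. By \Cref{r:nnprop}(1), the bound in \Cref{t:Bezout1} becomes $\nn(\mu^{|\infty},\tfrac{1}{2}e^{4\ol m}d^{2d}B_0,2H,b_0)$, under the corresponding admissibility assumption. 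All concrete applications later in the paper use $H=1$ (the weights $\rho\,\mu_{\on{an}}^{|\infty}$ and the Denjoy weights), so they are unaffected.
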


\begin{proof}
    By \eqref{eq:ai}, we have $b_i \le H \, b_{j}^{i/j}$ if $i\le j$. 
    Thus, if $j_1 + \cdots + j_\ell = k$ and $j_i \ge 1$ for all $1 \le i \le \ell$, then 
    \begin{equation}
        b_{j_1}b_{j_2} \cdots b_{j_\ell} \le H\, b_k^{j_1/k}\, H\, b_k^{j_2/k} \cdots H\, b_k^{j_\ell/k} 
        = H^\ell \, b_k 
    \end{equation}
    as claimed.
\end{proof}

\begin{lemma} \label[l]{l:Markov}
    For all $p \in \cP_n^d$ and $j \in \N$,
    \begin{equation}
        \sup_{x \in [0,1]^n}\|d^j p(x)\|_{L^j(\R^n,\R)} \le (2nd^2)^j \|p\|_{[0,1]^n}.
    \end{equation}
\end{lemma}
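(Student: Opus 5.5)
We reduce \Cref{l:Markov} to the one-variable Markov inequality on $[0,1]$: for $q \in \cP_1^d$, $\|q'\|_{[0,1]} \le 2d^2\|q\|_{[0,1]}$. This is the classical Markov inequality $\|q'\|_{[-1,1]} \le d^2 \|q\|_{[-1,1]}$ transported to $[0,1]$ by the affine map $t \mapsto 2t-1$, which contributes the factor $2$. Note that ``degree at most $d$'' here means maximal degree, so for fixed values of the other variables, $p$ is a polynomial of degree at most $d$ in each single variable.

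\textbf{Step 1 (partial derivatives).} For a multi-index $\alpha \in \N^n$ with $|\alpha| = j$, we show by induction on $j$ that
\[
\|\p^\al p\|_{[0,1]^n} \le (2d^2)^{j} \|p\|_{[0,1]^n}.
\]
The key observation is that $\p_i p$ is again in $\cP_n^d$. For each fixed value of the remaining variables, the map $x_i \mapsto p(x)$ is a univariate polynomial of degree at most $d$ on $[0,1]$. Applying the univariate Markov inequality gives $\|\p_i p\|_{[0,1]^n} \le 2d^2 \|p\|_{[0,1]^n}$. Iterating this $j$ times, applying each $\p_i$ to an element of $\cP_n^d$, yields the bound above.

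\textbf{Step 2 (the $j$-linear form).} We bound the operator norm of $d^jp(x)$, taken with respect to the Euclidean norm on $\R^n$, which is the convention presumably used later. For unit vectors $v_1,\dots,v_j$,
\[
d^jp(x)(v_1,\dots,v_j) = \sum_{i_1,\dots,i_j} \p_{i_1}\cdots\p_{i_j}p(x)\, v_{1,i_1}\cdots v_{j,i_j}.
\]
Its absolute value is at most $(2d^2)^j\|p\|_{[0,1]^n} \prod_k \|v_k\|_1$. Since $\|v_k\|_1 \le \sqrt n\,\|v_k\|_2 \le n$, this gives the stated bound $(2nd^2)^j \|p\|_{[0,1]^n}$; in fact the constant $\sqrt n$ in place of $n$ would suffice.

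\textbf{Expected obstacle.} The argument is essentially routine. The only points needing care are the factor $2$ coming from rescaling $[-1,1]$ to $[0,1]$, and confirming that $\cP_n^d$ uses the maximal degree in each variable, so that the degree in each variable remains at most $d$ after differentiation. If $\cP_n^d$ instead meant total degree, the argument would still work, since the degree in each separate variable is then also at most $d$.
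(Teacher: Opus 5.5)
Your proof is correct and follows the paper's argument. Both iterate the univariate Markov inequality on $[0,1]$ coordinatewise to get $\|\p^\al p\|_{[0,1]^n} \le (2d^2)^{|\al|}\|p\|_{[0,1]^n}$, then expand $d^jp(x)(v^1,\ldots,v^j)$ in coordinates and bound the sum; the only difference is that the paper bounds each of the $n^j$ terms separately, while your $\ell^1$--$\ell^2$ comparison gives the slightly sharper factor $n^{j/2}$.
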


\begin{proof}
Markov's inequality implies 
\begin{equation}
    \|\p^\al p\|_{[0,1]^n} \le (2d^2)^{|\al|}\, \|p\|_{[0,1]^n},
\end{equation}
for all $\al \in \N^n$.
Then, for $v^i= (v^i_1,\ldots,v^i_n) \in \R^n$ with $|v^i|\le 1$ and $x \in [0,1]^n$, 
\begin{align*}
    |(d^jp)(x)(v^1,\ldots,v^j)| &= \Big|\sum_{i_1=1, \ldots,i_j=1}^n  \p_{i_1}\cdots\p_{i_j} p(x) \, v^1_{i_1}\cdots v^j_{i_j}\Big| 
                                \le (2nd^2)^{j}\, \|p\|_{[0,1]^n}.
\end{align*}
The lemma is proved.
\end{proof}

\begin{proposition}[Fa\`a di Bruno's formula; cf.\ {\cite[Theorem 1]{Yamanaka89}}] \label[p]{p:FdBop}
    Let $f : U \to V$ and $g : V \to W$ be $\cC^k$ maps, where $U,V,W$ are open subsets of normed spaces.
    Then, for $x \in U$,
    \begin{equation}
        \frac{d^k(g \o f)(x)}{k!} = \on{sym} \,\sum  \frac{d^jg(f(x))}{j_1!\cdots j_k!}  \Big( \frac{d^1 f(x)}{1!}\Big)^{j_1}   \cdots \Big( \frac{d^k f(x)}{k!}\Big)^{j_k},
    \end{equation}
    where $j = j_1+ \cdots+j_k$, the sum is taken over all $j_1,\ldots,j_k$ such that 
    $j_1 + 2j_2 + \cdots + kj_k = k$, and
    $\on{sym}$ denotes symmetrization of a multilinear operator.
\end{proposition}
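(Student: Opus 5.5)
The plan is to derive the formula by comparing Taylor expansions of order $k$, rather than by a direct induction on $k$. Induction would require differentiating the symmetrized sums term by term, which is combinatorially messy.

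Fix $x \in U$ and put $y = f(x)$. Since $f$ and $g$ are $\cC^k$, Taylor's formula with Peano remainder gives
\[
    f(x+h) = y + \sum_{i=1}^k \frac{d^if(x)}{i!}h^i + o(|h|^k), \qquad g(y+u) = \sum_{j=0}^k \frac{d^jg(y)}{j!}u^j + o(|u|^k),
\]
where $h^i$ denotes $(h,\ldots,h)$ with $i$ entries. We substitute $u = u(h) := f(x+h)-y$ into the expansion of $g$. Because $u(h) = O(|h|)$, the remainder $o(|u|^k)$ is $o(|h|^k)$. By multilinearity, replacing $u$ by its polynomial part $\sum_i v_i$, with $v_i := \frac{d^if(x)}{i!}h^i$, changes each term $d^jg(y)u^j$ only by $o(|h|^k)$.

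Next we expand $d^jg(y)(v_1+\cdots+v_k)^j$. Since $d^jg(y)$ is symmetric, the multinomial theorem applies and gives
\[
    \frac{d^jg(y)}{j!}\Big(\sum_{i} v_i\Big)^j = \sum_{j_1+\cdots+j_k=j} \frac{d^jg(y)}{j_1!\cdots j_k!}\, v_1^{j_1}\cdots v_k^{j_k}.
\]
Each summand is a homogeneous polynomial in $h$ of degree $j_1+2j_2+\cdots+kj_k$. Collecting the summands of degree exactly $k$, and absorbing all higher degrees into $o(|h|^k)$, we obtain
\[
    g(f(x+h)) = \sum_{m=0}^k P_m(h) + o(|h|^k),
\]
where $P_k(h)$ is the right-hand side of the claimed formula evaluated at $h^k$.

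On the other hand, $g\o f$ is $\cC^k$ by the chain rule, so its own Taylor expansion has degree-$k$ part $\frac{d^k(g\o f)(x)}{k!}h^k$. The main step, though routine, is the uniqueness of Taylor expansions. If $\sum_{m\le k}Q_m(h) = o(|h|^k)$ with each $Q_m$ homogeneous of degree $m$, then all $Q_m$ vanish. This follows by scaling $h \mapsto th$ and letting $t\to 0$ successively for $m = 0,1,\ldots,k$.

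Hence the two homogeneous polynomials of degree $k$ coincide. A homogeneous polynomial determines a unique symmetric $k$-linear form by the polarization formula. Therefore $\frac{d^k(g\o f)(x)}{k!}$ equals the symmetrization of the multilinear operator $\sum \frac{d^jg(f(x))}{j_1!\cdots j_k!}\big(\frac{d^1f(x)}{1!}\big)^{j_1}\cdots\big(\frac{d^kf(x)}{k!}\big)^{j_k}$, which is the stated identity. The only point requiring care is keeping track of the remainder terms, and the estimate $|u(h)| = O(|h|)$ suffices for that.
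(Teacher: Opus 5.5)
Your proof is correct. The paper gives no argument for this proposition; it only cites Yamanaka's Theorem 1. So your comparison of Taylor expansions is a self-contained alternative, not a reconstruction of the paper's route.

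All your ingredients hold in the stated generality:
\begin{itemize}
\item The Peano form of Taylor's formula for $\cC^k$ maps follows by induction from the mean value inequality. It therefore holds for arbitrary, not necessarily complete, normed spaces.
\item The symmetry of $d^jg(y)$ justifies the multinomial expansion, which produces exactly the coefficients $\frac{1}{j!}\cdot\frac{j!}{j_1!\cdots j_k!}$.
\item The remainder bookkeeping works because $|u(h)|=O(|h|)$ and $u$ differs from its polynomial part by $o(|h|^k)$.
\item An operator and its symmetrization agree on the diagonal. Hence uniqueness of homogeneous parts plus polarization gives precisely the $\on{sym}$ in the statement.
\end{itemize}

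Your route avoids the combinatorics of an induction on $k$ and shows where each term comes from; the citation's advantage is brevity. In the paper's only application, $h = p\o\ol f$, the source is one-dimensional. Every $k$-linear form on $\R$ is already symmetric, so $\on{sym}$ is vacuous there and your polarization step is trivial: only the equality of the degree-$k$ Taylor coefficients is needed.
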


Let us now prove \Cref{p:composition}.
    Clearly, $\|h\|_{[0,1]} \le \|p\|_{[0,1]^2}$.
    By \Cref{p:FdBop}, for $1 \le k \le N$ and $x \in [0,1]$,  
    \begin{equation}
        \frac{|h^{(k)}(x)|}{k!} \le  \,\sum  \frac{\|d^jp(x,f(x))\|_{L^j(\R^2,\R)}}{j_1!\cdots j_k!}  
        \Big( \frac{|\p^1 \ol f(x)|}{1!}\Big)^{j_1}   \cdots \Big( \frac{|\p^k \ol f(x)|}{k!}\Big)^{j_k},
    \end{equation}
    where $j = j_1+ \cdots+j_k$ and the sum is taken over all $j_1,\ldots,j_k$ such that 
    $j_1 + 2j_2 + \cdots + kj_k = k$. 
    Since $\deg p \le d$, we have $d^j p =0$ for $j\ge d+1$. Thus 
    it is enough to take the sum on the right-hand side over all $j_1,\ldots,j_k$ satisfying
    $j_1 + 2j_2 + \cdots + kj_k = k$ and $j_1+\cdots + j_k =j \le d$; 
    let us write $\sum_{j \le d}$ for this restricted sum.

    Since $f$ is $(\mu^{|N},M_0)$-smooth and
    \begin{align*}
        \p^j\ol f(x) = 
        \begin{cases}
            (1,f'(x)) & \text{ if } j=1,
            \\
            (0,f^{(j)}(x)) & \text{ if } j\ge 2,
        \end{cases}
    \end{align*}
    we have, for $x \in [0,1]$,
    \[
        |\p^1\ol f(x)| = \sqrt{1+|f'(x)|^2} \le \sqrt{1+M_1^2} =  M_1 \sqrt{1+\tfrac{1}{M_1^2}} 
    \]
    and, if $j \ge 2$,
    \[
        \frac{|\p^j \ol f(x)|}{j!} = \frac{|f^{(j)}(x)|}{j!} \le \frac{M_j}{j!}.
    \]
    Writing $m_j := M_j/j!$ we find,
    by \Cref{l:rai}, using its notation,
    \begin{align*}
        m_1^{j_1}   \cdots m_k^{j_k} = M_0^j  b_1^{j_1}   \cdots b_k^{j_k} \le M_0^j H^j b_k,
    \end{align*}
    since $j_1+\cdots + j_k =j$ and $j_1 + 2j_2 + \cdots + kj_k = k$.
    By \Cref{l:Markov}, we conclude
    \begin{align} \label{eq:est}
        \frac{|h^{(k)}(x)|}{k!} 
    &\le \|p\|_{[0,1]^2} \, \sum_{j\le d}  \frac{(4d^2)^j}{j_1!\cdots j_k!}  \Big(\sqrt{1+\tfrac{1}{M_1^2}}\Big)^{j_1} m_1^{j_1}   \cdots m_k^{j_k}
    \\
    &\le \|p\|_{[0,1]^2} \, b_k\,  \sum_{j \le d}  \frac{\Big(4\, d^2 H M_0\sqrt{1+\tfrac{1}{M_1^2}}\Big)^j}{j_1!\cdots j_k!}
    \\
    &\le  e^{4 H \ol m}\, d^{2d}\, \|p\|_{[0,1]^2} \, b_k\,  \sum_{j \le d}  \frac{j!}{j_1!\cdots j_k!}  \qquad (\text{using $(4 H \ol m)^j \le e^{4 H \ol m} j!$})
       \\
       &\le e^{4 H \ol m}\, d^{2d}\, \|p\|_{[0,1]^2} \,2^{k-1}\, b_k, 
    \end{align}
    using $\sum  \frac{j! }{j_1!\cdots j_k!} = 2^{k-1}$ in
    the last inequality; see e.g.\ \cite[Lemma 1.4.1]{KrantzParks02}.
    This implies 
    \begin{equation}
        \|h^{(k)}\|_{[0,1]} \le \tfrac{1}{2} e^{4 H \ol m}\, d^{2d}\, \|p\|_{[0,1]^2} \, 2^{k} \mu_1 \cdots \mu_k.
    \end{equation}
    The proof of \Cref{p:composition} and thus of \Cref{t:Bezout1} is complete.

    The following is a variant of \Cref{p:composition} which gives a slightly better bound in \Cref{t:Bezout1} for small $d$.

\begin{proposition} \label[p]{p:composition2}
    In the setting of \Cref{p:composition},
    the composite
    $h = p \o \ol f : [0,1] \to \R$
    is $(2\, \mu^{|N}, \tfrac{1}{2}  (4 d^2 H \hat m)^d\, \|p\|_{[0,1]^2})$-smooth, where
    \begin{equation} \label{eq:olm2}
        \hat m := \max \Big\{  M_0\sqrt{1+\tfrac{1}{M_1^2}},1\Big\}.
    \end{equation}
\end{proposition}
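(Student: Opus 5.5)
We follow the proof of \Cref{p:composition} verbatim up to and including the estimate \eqref{eq:est}, and only change how the sum over $j \le d$ is handled. First, $\|h\|_{[0,1]} \le \|p\|_{[0,1]^2} \le \tfrac12 (4d^2H\hat m)^d\|p\|_{[0,1]^2}$, since $4d^2H\hat m \ge 4 \ge 2^{1/d}$. This settles the case $k=0$.

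For $1 \le k \le N$, Fa\`a di Bruno (\Cref{p:FdBop}), \Cref{l:Markov} with $n=2$, and \Cref{l:rai} give
\begin{equation}
    \frac{|h^{(k)}(x)|}{k!} \le \|p\|_{[0,1]^2}\, b_k \sum_{j\le d} \frac{\big(4d^2 H \ol m\big)^j}{j_1!\cdots j_k!}.
\end{equation}
Here the factor $\sqrt{1+1/M_1^2}$ occurs only with exponent $j_1 \le j$. We therefore bound it by $\big(\sqrt{1+1/M_1^2}\big)^{j}$, which is legitimate because this quantity is at least $1$.

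Now, instead of using $(4H\ol m)^j \le e^{4H\ol m} j!$, we use monotonicity in $j$. Since $4d^2H\hat m \ge 1$ and $1 \le j \le d$, we have
\begin{equation}
    (4d^2H\ol m)^j \le (4d^2 H\hat m)^j \le (4d^2H\hat m)^d.
\end{equation}
This is exactly why $\hat m$ includes the maximum with $1$. It then remains to bound $\sum_{j\le d} \frac{1}{j_1!\cdots j_k!}$. We do this by enlarging the sum to all compositions $j_1+2j_2+\cdots+kj_k=k$ and using $\frac{1}{j_1!\cdots j_k!} \le \frac{j!}{j_1!\cdots j_k!}$. The identity $\sum \frac{j!}{j_1!\cdots j_k!} = 2^{k-1}$ from \cite[Lemma 1.4.1]{KrantzParks02} then bounds the sum by $2^{k-1}$.

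Combining these bounds, and using $k!\, b_k = \mu_1\cdots\mu_k$, we get
\begin{equation}
    \|h^{(k)}\|_{[0,1]} \le \tfrac12 (4d^2H\hat m)^d \|p\|_{[0,1]^2}\, 2^k \mu_1\cdots\mu_k.
\end{equation}
This is precisely $(2\mu^{|N}, \tfrac12(4d^2H\hat m)^d\|p\|_{[0,1]^2})$-smoothness.

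The main obstacle is the bookkeeping of the factor $M_0^j$. In \Cref{p:composition} this factor is absorbed together with $4H$ into $\ol m$. Here we must make sure the whole base $4d^2HM_0\sqrt{1+1/M_1^2}$ is at least $1$ before passing from exponent $j$ to exponent $d$. Taking the maximum with $1$ in \eqref{eq:olm2} guarantees this, and everything else is routine.
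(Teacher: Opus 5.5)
Your proposal is correct and is essentially the paper's proof: you modify \eqref{eq:est} only by replacing $\ol m$ with $\hat m\ge 1$, so that $(4d^2H\hat m)^j\le(4d^2H\hat m)^d$ for $j\le d$, and then bound $1/(j_1!\cdots j_k!)$ by $j!/(j_1!\cdots j_k!)$ and use the $2^{k-1}$ identity, exactly as the paper does (your explicit check of $k=0$ is a small addition the paper leaves implicit). One caveat, inherited from the paper rather than specific to your argument: \Cref{l:rai} as proved only gives $H^k b_k$, because $a_i\le Ha_j$ yields $b_i\le H^i b_j^{i/j}$ rather than $H b_j^{i/j}$, so the step $m_1^{j_1}\cdots m_k^{j_k}\le M_0^jH^jb_k$ is only justified for $H=1$ (e.g.\ Denjoy weights), while for general $H$ the same argument yields $(2H\mu^{|N},\tfrac12(4d^2\hat m)^d\|p\|_{[0,1]^2})$-smoothness.
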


\begin{proof}
    It suffices to modify the estimate \eqref{eq:est} in the following way:
    \begin{align*}
        \frac{|h^{(k)}(x)|}{k!} 
    &\le \|p\|_{[0,1]^2} \, b_k\,  \sum_{j \le d}  \frac{(4\, d^2 H \hat m)^j}{j_1!\cdots j_k!}
\le (4\, d^2 H \hat m)^d\, \|p\|_{[0,1]^2} \, b_k\,  \sum  \frac{j! }{j_1!\cdots j_k!}.
   \end{align*}
   Then the proof of \Cref{p:composition} implies the statement.
\end{proof}

\begin{remark}
   An example, where $d$ is small, is the number of fixed points of $f$.  
    Indeed, the polynomial $p(X,Y) = X-Y$ satisfies $\|p\|_{[0,1]^2} = 1$ and $\deg p =1$.
    Assume that $(\mu^{|\infty},M_0(1+\frac{1}{M_1}),1,b_0)$ is admissible and let 
    \[ 
        \nn = \nn(\mu^{|\infty}, M_0(1+\tfrac{1}{M_1}) ,1,b_0).
    \]
    Let $f : [0,1] \to [0,1]$ be $(\mu^{|\nn},M_0)$-smooth with $\sup_{x\in [0,1]} |x-f(x)|\ge b_0$. 
    By a direct computation,
    the composite function $h(x) = p(x,f(x)) = x-f(x)$, $x \in [0,1]$, is 
    $(\mu^{|\nn}, M_0(1+\tfrac{1}{M_1}))$-smooth. (Alternatively, one can use \Cref{p:composition2} which 
    however gives weaker bounds in this case.)
    Moreover, $\|h\|_{[0,1]}\ge b_0$. 
    By \Cref{p:zeros}, the total number of fixed points of $f$ (counted with multiplicities) 
is finite and bounded by $\nn$.
\end{remark}

\section{Relation to the norming constant} \label{sec:norming}

\subsection{Norming sets and the norming constant}

We recall a definition of \cite{BRUDNYI:2015vu}.

\begin{definition}[{\cite[Definition 1.3]{BRUDNYI:2015vu}}]
    Let $\cV \subseteq C^0([0,1]^n,\R)$ be a finite dimensional subspace.
    A compact subset $Z \subseteq [0,1]^n$ is said to be \emph{$\cV$-norming} if there is a constant $C>0$ 
    such that 
    \begin{equation}
        \|v\|_{[0,1]^n} \le C \, \|v\|_Z, \quad v \in \cV.
    \end{equation}
    The infimum of all such constants $C$ is denoted by $N_{\cV}(Z)$ and called the \emph{$\cV$-norming constant of $Z$}.
This is an inequality of Remez type on a set of zero measure.
\end{definition}

Let $f : [0,1]\to [0,1]$ be a transcendental function, i.e.,  $p(x,f(x)) \not\equiv 0$ for all nonzero $p \in \R[X,Y]$. 
Then the graph $\Ga(f)$ of $f$ is $\cP_2^d$-norming for every $d$; see \cite[Proposition 1.4]{BRUDNYI:2015vu}. 
Let us define 
\begin{align*}
    \ee(p,f) &:= \log\Big(\frac{\|p\|_{[0,1]^2}}{\|p\|_{\Ga(f)}}\Big),
\end{align*}
for $p \in \cP_2^d \setminus \{0\}$,
and
\begin{equation}
    \ee_d(f) :=\sup_{p \in \cP_2^d \setminus \{0\}} \ee(p,f) = \log N_{\cP_2^d}(\Ga(f)).
\end{equation}
Then $\ee(p,f)$ and $\ee_d(f)$ are nonnegative numbers. 

\subsection{B\'ezout bounds in terms of the norming constant}

Let $f : [0,1] \to [0,1]$ be transcendental and $p \in \cP_2^d \setminus \{0\}$.
Recall that $\zz(p,f)$ is the total number of zeros of $p(x,f(x))$, for $x \in [0,1]$, (counted with multiplicities).
Define
\begin{equation}
    \zz_d(f) := \sup_{p \in \cP_2^d \setminus \{0\}} \zz(p,f).
\end{equation}

Let $\mu^{|\infty} = (\mu_j)_{j\ge 1}$ be an admissible quasianalytic weight and $M_0>0$.
Let $f : [0,1] \to [0,1]$ be a transcendental $(\mu^{|\infty},M_0)$-smooth function. 
We will see that $\zz_d(f)$ is finite and 
can be bounded by an explicit function of $d$ and the logarithm of the norming constant $\ee_d(f)$. 
This explicit function only depends on $(\mu^{|\infty},M_0)$.

To this end, we define $\et_{\mu^{|\infty}} : [0,\infty) \to \N$ by  
\begin{equation}
    \et_{\mu^{|\infty}} (t) := \nn(\mu^{|\infty},e^t, 2,1).
\end{equation}
By the definition of $\nn$, we have $\et_{\mu^{|\infty}} (t) = \et_{\mu^{|\infty}} (\lceil t \rceil)$. 
Note that the function $\et_{\mu^{|\infty}}$ is increasing (see \Cref{r:nnprop}).

\begin{theorem} \label[t]{c:zded}
    Let $\mu^{|\infty} = (\mu_j)_{j\ge 1}$ be an admissible quasianalytic weight satisfying \eqref{eq:ai} with constant $H\ge1$. 
    Let $M_0>0$.
If $f : [0,1] \to [0,1]$ is a transcendental $(\mu^{|\infty},M_0)$-smooth function, 
then, for each $p \in \cP_2^d\setminus \{0\}$, 
\begin{equation} \label{eq:zep}
    \zz(p,f) \le \et_{\mu^{|\infty}}\big( 2d \log d + \ee(p,f) + 4H \ol m -\log 2 \big).
\end{equation}
Consequently,
\begin{equation} \label{eq:ze}
    \zz_d(f) \le  \et_{\mu^{|\infty}} \big( 2d \log d + \ee_d(f) + 4H \ol m -\log 2 \big), \quad d \ge 1.
\end{equation}
\end{theorem}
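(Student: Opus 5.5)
The plan is to apply \Cref{t:Bezout1} to a suitably normalized polynomial and then use the scaling properties of $\nn$ from \Cref{r:nnprop} to rewrite the resulting degree as a value of $\et_{\mu^{|\infty}}$.

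\emph{Normalization.} Fix $p \in \cP_2^d\setminus\{0\}$. Since $f$ is transcendental, $p(x,f(x)) \not\equiv 0$, so $\|p\|_{\Ga(f)}>0$. Both $\zz(p,f)$ and $\ee(p,f)$ are invariant under multiplying $p$ by a nonzero scalar, so we may assume $\|p\|_{\Ga(f)} = 1$. Then $\|p\|_{[0,1]^2} = e^{\ee(p,f)}$.

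\emph{Applying \Cref{t:Bezout1}.} Apply the theorem with
\[
b_0 := 1, \qquad B_0 := e^{\ee(p,f)} \ge 1 = b_0 .
\]
Because $\mu^{|\infty}$ is quasianalytic, the admissibility hypothesis \eqref{eq:condition} holds automatically for every choice of parameters. Moreover, $f$ is $(\mu^{|\infty},M_0)$-smooth, hence in particular $(\mu^{|\nn},M_0)$-smooth. The theorem therefore gives $\zz(p,f) \le \nn$, where
\[
\nn = \nn\big(\mu^{|\infty},\tfrac12 e^{4H\ol m} d^{2d} e^{\ee(p,f)}, 2, 1\big).
\]

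\emph{Rewriting as $\et_{\mu^{|\infty}}$.} Writing
\[
\tfrac12 e^{4H\ol m} d^{2d} e^{\ee(p,f)} = e^{t}, \qquad t = 2d\log d + \ee(p,f) + 4H\ol m - \log 2,
\]
we get $\nn = \nn(\mu^{|\infty},e^t,2,1) = \et_{\mu^{|\infty}}(t)$ directly from the definition of $\et_{\mu^{|\infty}}$. This is exactly \eqref{eq:zep}.

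One point needs a check: $\et_{\mu^{|\infty}}$ is defined on $[0,\infty)$, so we need $t \ge 0$. This holds whenever $4H\ol m \ge \log 2$. If $t<0$, the quantity $\nn(\mu^{|\infty},e^t,2,1)$ is still defined, since $j_0 = \lceil t\rceil_\N = 0$ coincides with the value at $t=0$. So the bound holds in that case as well, interpreting $\et_{\mu^{|\infty}}(t) := \et_{\mu^{|\infty}}(0)$ for $t<0$.

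\emph{Passing to the supremum.} For \eqref{eq:ze}, use $\ee(p,f) \le \ee_d(f)$, which is finite because $\Ga(f)$ is $\cP_2^d$-norming. Since $\et_{\mu^{|\infty}}$ is increasing by \Cref{r:nnprop}(2) (the degree $\nn$ increases with $M_0$, as $j_0$ grows), we get $\zz(p,f) \le \et_{\mu^{|\infty}}(2d\log d + \ee_d(f) + 4H\ol m - \log 2)$ for every nonzero $p$. Taking the supremum over $p$ gives \eqref{eq:ze}.

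I expect no serious obstacle here: the argument is bookkeeping. The only care needed is the scale invariance of $\zz$ and $\ee$, and the monotonicity of $\et_{\mu^{|\infty}}$.
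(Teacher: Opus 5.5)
Your proof is correct and follows the paper's argument. The paper applies \Cref{t:Bezout1} with $b_0=\|p\|_{\Ga(f)}$ and $B_0=\|p\|_{[0,1]^2}$, and then uses the scaling identity of \Cref{r:nnprop}(1) to obtain the ratio $\|p\|_{[0,1]^2}/\|p\|_{\Ga(f)}$; your normalization $\|p\|_{\Ga(f)}=1$ achieves the same thing, and the monotonicity step for \eqref{eq:ze} is identical. Your remark that $t$ can be negative when $4H\ol m<\log 2$, where $\nn(\mu^{|\infty},e^t,2,1)=\et_{\mu^{|\infty}}(0)$, addresses a point the paper passes over silently.
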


\begin{proof}
    By (the proof of) \Cref{t:Bezout1},
    \begin{equation} 
        \zz(p,f) \le \nn(\mu^{|\infty},\tfrac{1}{2} e^{4 H \ol m} d^{2d}\,\|p\|_{[0,1]^2},2,\|p\|_{\Ga(f)}).
    \end{equation}
    Now, by the properties of $\nn$ (see \Cref{r:nnprop}),
    \begin{align}
        \nn(\mu^{|\infty},\tfrac{1}{2} e^{4 H \ol m} d^{2d}\,\|p\|_{[0,1]^2},2,\|p\|_{\Ga(f)}) 
        &=  
        \nn\Big(\mu^{|\infty},\tfrac{1}{2} e^{4 H \ol m} d^{2d}\,\frac{\|p\|_{[0,1]^2}}{\|p\|_{\Ga(f)}},2,1\Big)
        \\
        &=
        \et_{\mu^{|\infty}}\big(2d \log d + \ee(p,f) + 4H \ol m -\log 2\big)
    \end{align}
    implying \eqref{eq:zep}.
    Since $\et_{\mu^{|\infty}}$ is increasing, \eqref{eq:ze} is immediate from \eqref{eq:zep}.
\end{proof}

Relations similar to \eqref{eq:zep} and \eqref{eq:ze} have been established in \cite{Coman:2003aa}, 
\cite{Coman:2007aa}, and \cite{Brudnyi:2008aa} in the framework of entire functions. 
In the following, we analyze $\et_{\mu^{|\infty}}$ for particular weights.

\subsection{Real analytic and entire transcendental functions}

Let $f : [0,1] \to [0,1]$ be real analytic.
Then there exist constants $C,\rh>0$ such that
$\|f^{(j)}\|_{[0,1]} \le C \rh^j j!$ for all $j \ge 0$. 
So $f$ is $(\rh\, \mu_{\on{an}}^{|\infty},C)$-smooth, where $\mu_{\on{an}}^{|\infty} := (j)_{j \ge 1}$.
By \Cref{p:Denjoy},  
\begin{equation} \label{eq:an}
    e^{2e\rh-1}(m+1)-1 \le \tfrac{1}{2}\et_{\rh\, \mu_{\on{an}}^{|\infty}}(m) \le e^{2e\rh+1} (m+1) - 1, \quad m \in \N_{\ge 1}.
\end{equation}

Thus, \Cref{c:zded} implies the following corollary.

\begin{corollary}
Let $f : [0,1] \to [0,1]$ be real analytic and transcendental. 
There exist $C,\rh>0$ (depending on $f$) such that 
\begin{equation} \label{eq:an1}
    \zz_d(f) \le 2\, e^{2e\rh+1} \Big(2d\log d + \ee_d(f)  + 4 C\sqrt{1 + \tfrac{1}{C^2\rh^2}} +1 -\log 2\Big), \quad d\ge 1.
\end{equation}
\end{corollary}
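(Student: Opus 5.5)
The plan is to combine \Cref{c:zded} with the two-sided estimate \eqref{eq:an} for $\et_{\rh\,\mu_{\on{an}}^{|\infty}}$. Since $f$ is real analytic on $[0,1]$, there are $C,\rh>0$ with $\|f^{(j)}\|_{[0,1]}\le C\rh^j j!$ for all $j$. Hence $f$ is $(\rh\,\mu_{\on{an}}^{|\infty},C)$-smooth, with $M_0=C$ and $\mu_j=\rh j$. This weight is admissible. It is quasianalytic, since $\sum 1/(\rh j)=\infty$.

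First I verify \eqref{eq:ai}. For $\mu_j=\rh j$ we get $\mu_1\cdots\mu_j=\rh^j j!$, so $a_j=\rh$ for every $j$. Thus \eqref{eq:ai} holds with $H=1$. Next I compute $\ol m$. Here $M_1=M_0\mu_1=C\rh$, so
\[
\ol m = C\sqrt{1+\tfrac{1}{C^2\rh^2}}.
\]
Applying \Cref{c:zded} then gives
\[
\zz_d(f)\le \et_{\rh\,\mu_{\on{an}}^{|\infty}}\Big(2d\log d+\ee_d(f)+4C\sqrt{1+\tfrac{1}{C^2\rh^2}}-\log 2\Big).
\]

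To finish, I use $\et(t)=\et(\lceil t\rceil)$ and set $m:=\lceil t\rceil$, where $t$ is the argument above. The upper half of \eqref{eq:an} gives $\et(m)\le 2e^{2e\rh+1}(m+1)-2$. Since $m+1\le t+2$, this yields $\et(t)\le 2e^{2e\rh+1}(t+2)$. That is slightly weaker than the claim, which has $t+1$.

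So the one step needing care is the rounding and the range of $m$. The bound \eqref{eq:an} is stated only for $m\ge 1$. Three points handle this.
\begin{itemize}
\item Since $t>0$ (as $4\ol m>\log 2$ when $C\rh$ is small, and in general I can enlarge $C$), we have $m\ge1$, so \eqref{eq:an} applies.
\item The inequality $\et(m)\le 2e^{2e\rh+1}(m+1)-2$ combined with $m<t+1$ gives $\et(t)< 2e^{2e\rh+1}(t+2)-2$.
\item To reach $t+1$ I would instead enlarge $C$, which only increases the term $4\ol m$ in the bound. Alternatively, one can check that the upper bound in \Cref{p:Denjoy} really holds for real $m$ in the form $e^{2e\rh+1}(t+1)$. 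I expect the latter is how the $+1$ in \eqref{eq:an1} arises.
\end{itemize}

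Monotonicity of $\et$ (\Cref{r:nnprop}) justifies replacing $\ee(p,f)$ by $\ee_d(f)$ throughout. The proof is then complete.
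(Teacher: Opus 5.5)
Your proposal is correct and is the paper's argument: apply \eqref{eq:ze} with $H=1$ and $\ol m=C\sqrt{1+1/(C^2\rh^2)}$, then use the upper bound in \eqref{eq:an}. The rounding issue $\et(t)=\et(\lceil t\rceil)$ that you flag is real (the paper's one-line proof passes over it), and both of your fixes work: enlarging $C$ is legitimate because the statement only asserts the existence of $C,\rh$, and the sharper route is confirmed by the proof of \Cref{p:Denjoy}, which for $k=0$ actually gives $\tfrac12\et_{\rh\,\mu_0^{|\infty}}(m)< e^{2e\rh+1}m-1$ (since $A^0_m\le \log m+1$), so $m=\lceil t\rceil\le t+1$ yields the $+1$ with the original constants; note also that your remark ``$4\ol m>\log 2$ when $C\rh$ is small'' is false as stated (take $C$ tiny and $\rh$ large), but $t>0$ holds anyway, e.g.\ because $p=Y$ gives $\ee_d(f)\ge -\log\|f\|_{[0,1]}\ge -\log C$.
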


\begin{proof}
    In this case, $H=1$, $M_0 = C$, and $M_1 = C\rh$. 
    The bound follows from \eqref{eq:ze} and \eqref{eq:an}.
\end{proof}

If $f : [0,1] \to [0,1]$ is the restriction of an entire function, then 
for each $\ta>0$ there exists $C_\ta>0$ such that $\|f^{(j)}\|_{[0,1]} \le C_\ta \ta^j j!$ for all $j \ge 0$.

\begin{corollary} \label[c]{c:en}
Let $f : [0,1] \to [0,1]$ be transcendental and the restriction of an entire function. 
There exists $C=C(f)>0$ such that 
\begin{equation}
    \zz_d(f) \le 12\,d\log d + 6\, \ee_d(f)  + C, \quad d\ge 1.
\end{equation}
\end{corollary}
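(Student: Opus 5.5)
The plan is to apply \Cref{c:zded} with a suitably rescaled analytic weight and then read off the linear growth from \eqref{eq:an}. The key point is that for an entire function the parameter $\rh$ can be taken as small as we like, so the factor $2e^{2e\rh+1}$ in \eqref{eq:an} can be pushed below $3$. This is where the constants $12 = 6\cdot 2$ and $6$ come from.

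First, fix $\ta>0$ so small that $e^{2e\ta+1}\le 3$, i.e.\ $2e\ta \le \log 3 - 1$ (any $\ta \le 0.018$ works). Let $C_\ta$ be the constant with $\|f^{(j)}\|_{[0,1]} \le C_\ta \ta^j j!$. Then $f$ is $(\ta\,\mu_{\on{an}}^{|\infty},C_\ta)$-smooth, and the weight $\ta\,\mu_{\on{an}}^{|\infty}$ is admissible and quasianalytic. For it, $a_j = \ta$ is constant, so \eqref{eq:ai} holds with $H=1$. Moreover $M_0=C_\ta$ and $M_1=C_\ta\ta$, so $\ol m = C_\ta\sqrt{1+1/(C_\ta\ta)^2}$ is a constant depending only on $f$ and $\ta$.

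Second, \eqref{eq:ze} of \Cref{c:zded} gives
\[
\zz_d(f) \le \et_{\ta\,\mu_{\on{an}}^{|\infty}}(t_d), \qquad t_d := 2d\log d + \ee_d(f) + 4\ol m - \log 2 .
\]
To apply the upper bound in \eqref{eq:an}, which is stated only for integers $m\ge 1$, we use that $\et$ is increasing and satisfies $\et(t)=\et(\lceil t\rceil)$. Thus $\et(t_d) \le \et(m_d)$ with $m_d := \max\{\lceil t_d\rceil, 1\} \le t_d + 2$; the inequality $m_d\le t_d+2$ holds also when $m_d=1$, since $t_d \ge -\log 2$. Then \eqref{eq:an} yields
\[
\et(m_d) \le 2e^{2e\ta+1}(m_d+1) \le 6(m_d+1) \le 6(t_d+3).
\]
Substituting $t_d$ gives $\zz_d(f) \le 12\, d\log d + 6\,\ee_d(f) + C$ with $C = 6(4\ol m - \log 2 + 3)$, which depends only on $f$ through $\ta$ and $C_\ta$.

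The only point requiring care is the choice of $\ta$: it must make the slope $2e^{2e\ta+1}$ at most $6$. This is why the entire case gives the explicit constants, whereas the merely real analytic case gives the $\rh$-dependent slope in \eqref{eq:an1}. The rest is bookkeeping with the ceiling and the restriction $m\ge1$ in \eqref{eq:an}, both of which only enlarge the additive constant $C$.
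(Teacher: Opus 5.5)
Your proposal is correct and is essentially the paper's proof. The paper chooses $\tau$ with $2e\tau+1\le\log 3$ (equivalently $e^{2e\tau+1}\le 3$) and applies \eqref{eq:an1} with $\rho=\tau$; \eqref{eq:an1} is itself \eqref{eq:ze} combined with \eqref{eq:an}, which you unpack directly. Your explicit handling of the ceiling and of the restriction $m\ge 1$ in \eqref{eq:an} is, if anything, more careful than the paper's passage to \eqref{eq:an1}, and in any case only affects the additive constant $C$.
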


\begin{proof}
    Choose $\ta>0$ such that $2e\ta +1 \le \log 3$ and apply \eqref{eq:an1} for $\rh = \ta$. 
\end{proof}

We can combine this with a result which is a consequence of \cite[Theorem 2.5]{Brudnyi:2008aa}; see also \cite[Theorem 1.1]{Coman:2007aa} for the special case of $n=1$ and $f$ being of finite order.

\begin{theorem}[{\cite[Theorem 2.9]{BRUDNYI:2015vu}}] \label[t]{t:BY}
    Let $f$ be a non-polynomial entire function on $\C^n$ that is real on $\R^n$.
    Assume that the graph $\Ga(f|_{\R^n})$ of $f|_{\R^n}$ 
    intersects $[0,1]^{n+1}$ 
    in a subset $Z$ of real dimension $n$.
    Then there exists a sequence of positive integers $d_j \to \infty$ and a sequence of positive reals $\ep_j \to 0$ 
    such that 
    \begin{equation}
    \log N_{\cP_{n+1}^{d_j}}(Z) \le d_j^{2+\ep_j},\quad  \text{ for all } j.
    \end{equation}
    The sequence $(d_j)$ depends only on $f$ and the sequence $(\ep_j)$ only on $f$, $n$, and the $n$-dimensional Hausdorff measure of $Z$.
\end{theorem}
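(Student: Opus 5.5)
The statement is quoted from \cite[Theorem 2.9]{BRUDNYI:2015vu}. I would prove it in two stages. In the first stage a Remez-type inequality for entire functions transports norms from the set $E$ to the cube. In the second stage a transcendence argument, valid only along a subsequence of degrees, controls how small $p(x,f(x))$ can be on the cube compared with $\|p\|_{[0,1]^{n+1}}$.

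\emph{Setup.} Let $E \subseteq [0,1]^n$ be the projection of $Z$ to the first $n$ coordinates. Since $Z$ is a graph over $E$ of real dimension $n$, the Lebesgue measure $|E|$ is positive and comparable to the $n$-dimensional Hausdorff measure of $Z$. For $p \in \cP^d_{n+1}$ set $g_p(z) := p(z,f(z))$. This is an entire function on $\C^n$, and $\|p\|_Z = \|g_p\|_E$. For a fixed radius $R>1$ we have
\begin{equation}
\log \|g_p\|_{B_R} \le \log \|p\|_{[0,1]^{n+1}} + d \log^+ \|f\|_{B_R} + C_n\, d \log R .
\end{equation}
This follows from the Bernstein--Walsh inequality applied to $p$ on the polydisc containing $\{(z,f(z)) : z \in B_R\}$.

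\emph{Step 1 (Remez for entire functions).} I would invoke Brudnyi's Remez inequality for entire functions \cite[Theorem 2.5]{Brudnyi:2008aa}. In the form I need it reads
\begin{equation}
\|g\|_{[0,1]^n} \le \Big(\frac{c_n}{|E|}\Big)^{\beta(g)} \|g\|_E, \qquad \beta(g) \lesssim \log \frac{\|g\|_{B_R}}{\|g\|_{[0,1]^n}} .
\end{equation}
Here $\beta(g)$ is a Chebyshev/Bernstein-type degree of $g$ and $c_n$ depends only on $n$. Combined with the growth estimate above, this reduces the theorem to a lower bound of the form
\begin{equation}
\log \frac{\|p\|_{[0,1]^{n+1}}}{\|g_p\|_{[0,1]^n}} \le d^{2+\ep}
\end{equation}
for all $p \in \cP^{d}_{n+1}$, where $d=d_j$ ranges over a suitable sequence. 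Once this holds, both $\beta(g_p)$ and $\log N_{\cP^{d_j}_{n+1}}(Z)$ are $O(d_j^{2+\ep}\log(c_n/|E|))$. The factor $\log(c_n/|E|)$ is harmless after enlarging $\ep_j$, which explains why $(\ep_j)$ depends on the Hausdorff measure of $Z$.

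\emph{Step 2 (main obstacle: a transcendence measure along a subsequence).} I expect the hard part to be bounding the smallness of $p(x,f(x))$ on the cube relative to $\|p\|$. This cannot hold for all $d$, since $f$ might be extremely well approximable by algebraic functions. I would first try a dimension-counting and extremal-value argument in the spirit of Coman--Poletsky.

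The space $\cP^d_{n+1}$ has dimension about $d^{n+1}$, and the map $p \mapsto g_p$ is injective because $f$ is non-polynomial; I would first check that this is what the argument needs, since it is weaker than algebraic independence. Let $\delta_d$ be the minimum of $\|g_p\|_{[0,1]^n}$ over $\|p\|=1$. Using the Taylor coefficients of $g_p$ up to order about $d^{2}$, together with Hadamard/Cauchy estimates on $B_R$, I would compare $\delta_d$ with $\delta_{d'}$ for $d'$ slightly larger than $d$.

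If $\log(1/\delta_d)$ exceeded $d^{2+\ep}$ for all large $d$, an auxiliary-polynomial (Siegel lemma) construction would produce polynomials of degree $d'$ whose compositions $g_p$ vanish to too high an order. A Schwarz-lemma argument would then force $g_p \equiv 0$, contradicting injectivity. This yields infinitely many $d_j$ with $\log(1/\delta_{d_j}) \le d_j^{2+\ep_j}$, and the sequence $(d_j)$ depends only on $f$.

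The delicate points are the bookkeeping of the exponents, so that one really obtains the exponent $2+\ep$, and making the argument uniform in $p$.
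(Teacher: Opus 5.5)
Your Step~1 is a correct reduction. Granted a Remez inequality for holomorphic functions whose exponent is controlled by the Bernstein index of $g_p$, the bound follows from
$\log(\|p\|_{[0,1]^{n+1}}/\|g_p\|_{[0,1]^n})\le d^{2+\ep}$ for all $p\in\cP^d_{n+1}$, $d=d_j$. Since $\|p\|_Z=\|g_p\|_E\le\|g_p\|_{[0,1]^n}$, this inequality is also \emph{necessary} for the statement as written.

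The gap is that this inequality is false in general, so Step~2 cannot be carried out. Take $f(z)=\sum_{k\ge1}e^{-e^k}z^k$. It is entire (of order zero), not a polynomial, and real on $\R$. Moreover $0\le f\le 1$ on $[0,1]$, so $Z=\{(x,f(x)):x\in[0,1]\}$ has dimension $1$. For every $d\ge1$, the polynomial $p_d(z,w)=w-\sum_{k\le d}e^{-e^k}z^k\in\cP_2^d$ satisfies $\|p_d\|_{[0,1]^2}\ge|p_d(0,1)|=1$. On the other hand, $\|g_{p_d}\|_{[0,1]}=\sum_{k>d}e^{-e^k}\le 2e^{-e^{d+1}}$. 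Hence $\log(1/\delta_d)\ge e^{d+1}-\log 2$ for \emph{every} $d$, so the contradiction you aim for does not exist.

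The proposed mechanism was misdirected anyway. Siegel's lemma manufactures polynomials with small $g_p$, i.e.\ it gives lower bounds for $\log(1/\delta_d)$. Schwarz's lemma turns high vanishing into smallness. The step ``vanishing to too high an order forces $g_p\equiv 0$'' is a multiplicity estimate, which fails for general transcendental entire $f$. For instance, for $\sum_k\epsilon_k z^{n_k}$ with $n_{k+1}/n_k\to\infty$, the degree-$n_K$ polynomial $w-\sum_{k\le K}\epsilon_k z^{n_k}$ gives vanishing order $n_{K+1}$. Injectivity of $p\mapsto g_p$ carries no quantitative information.

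The same example shows that the statement itself fails when read with the paper's definition of $N_{\cP^d_{n+1}}(Z)$, i.e.\ sup over the ambient cube $[0,1]^{n+1}$ against sup over $Z$: here $\log N_{\cP_2^d}(Z)\ge e^{d+1}-\log 2$ for all $d$. The paper gives no proof, only the citation \cite[Theorem 2.9]{BRUDNYI:2015vu}. The cited result must therefore be normalized differently. Presumably it is a Remez-type inequality \emph{inside} the graph, comparing $\sup|p|$ over $\Ga\cap[0,1]^{n+1}$ with $\sup|p|$ over a subset $Z$ of the graph of positive $\mathcal{H}^n$-measure. That reading would also explain why $(\ep_j)$ is said to depend on $\mathcal{H}^n(Z)$ separately from $f$.

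For that version your Step~1 is the right frame, but the exponent must be bounded directly through the Bernstein index $\log(\|g_p\|_{B_R}/\|g_p\|_{B_r})$, not through the ambient ratio. The Bernstein index does not see polynomial approximability: for the $p_d$ above it is only about $(d+1)\log(R/r)$. A bound $\le d_j^{2+\ep_j}$ for it, uniform in $p\in\cP^{d_j}_{n+1}$, is the substantive input. The paper traces this to \cite[Theorem 2.5]{Brudnyi:2008aa}, and to \cite[Theorem 1.1]{Coman:2007aa} for $n=1$. You cite that theorem only for the Remez step and do not supply this input.

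Note also that the example breaks the derivation of the paper's subsequent corollary, which uses the statement in its ambient form through $\ee_{d_j}(f)$.
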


\begin{corollary}
Let $f : [0,1] \to [0,1]$ be transcendental and the restriction of an entire function that is real on $\R$. 
There exists a sequence of positive integers $d_j \to \infty$, a sequence of positive reals $\ep_j \to 0$, 
and a constant
$C>0$ such that 
\begin{equation}
    \zz_{d_j}(f) \le 6\, d_j^{2+ \ep_j} + 12\, d_j \log d_j + C, \quad  \text{ for all } j.
\end{equation}
\end{corollary}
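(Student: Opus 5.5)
The plan is to combine \Cref{c:en} with \Cref{t:BY} applied in the case $n=1$. First I check that the hypotheses of \Cref{t:BY} hold. Let $F$ be the entire function on $\C$, real on $\R$, whose restriction to $[0,1]$ is $f$. Then $F$ is not a polynomial. Indeed, if $F$ were a polynomial $q$, then $p(X,Y)=Y-q(X)$ would be a nonzero element of $\R[X,Y]$ with $p(x,f(x))\equiv 0$, contradicting transcendence of $f$. Next, since $f$ maps $[0,1]$ into $[0,1]$, we have $\Ga(F|_{\R})\cap[0,1]^2 \supseteq \Ga(f)$. The set $Z=\Ga(F|_\R)\cap [0,1]^2$ is the graph of $F$ over $F^{-1}([0,1])\cap[0,1]=[0,1]$, so $Z=\Ga(f)$. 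This is a smooth curve, hence of real dimension $1$.

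\Cref{t:BY} then provides integers $d_j\to\infty$ and reals $\ep_j\to 0$ with
\[
\log N_{\cP_2^{d_j}}(\Ga(f)) \le d_j^{2+\ep_j}
\]
for all $j$. By the identification $\ee_d(f)=\log N_{\cP_2^d}(\Ga(f))$ from \Cref{sec:norming}, this reads $\ee_{d_j}(f)\le d_j^{2+\ep_j}$.

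Finally, \Cref{c:en} gives a constant $C=C(f)$ with $\zz_d(f)\le 12\,d\log d+6\,\ee_d(f)+C$ for all $d\ge1$. Specializing to $d=d_j$ and inserting the bound on $\ee_{d_j}(f)$ yields
\[
\zz_{d_j}(f)\le 6\,d_j^{2+\ep_j}+12\,d_j\log d_j+C.
\]
This is the claim.

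The only mildly delicate point is the verification that $Z$ coincides with $\Ga(f)$ and that $F$ is non-polynomial. Both follow directly, as explained above, from $f([0,1])\subseteq[0,1]$ and from transcendence.
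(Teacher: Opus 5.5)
Your proposal is correct and is the paper's argument: the paper's proof is the one line ``combine \Cref{c:en} with \Cref{t:BY}'' (with $n=1$), exactly as you do. Your checks that the entire extension is non-polynomial (by transcendence) and that $Z=\Ga(F|_\R)\cap[0,1]^2$ equals $\Ga(f)$ and is one-dimensional (by $f([0,1])\subseteq[0,1]$) supply hypotheses of \Cref{t:BY} that the paper leaves implicit.
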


\begin{proof}
    Combine \Cref{c:en} with \Cref{t:BY}.
\end{proof}

In general, one cannot expect that $\zz_d(f)$ is bounded by a polynomial in $d$, for all degrees $d\ge 1$.

\begin{remark} \label[r]{r:sp}
For each $\ze \in (0,1)$,
    there exists an analytic function $f$ defined in a neighborhood of $[0,1]$ 
    and a sequence of positive integers $d_j \to \infty$ satisfying 
    \[
        \zz_{d_j}(f) \ge e^{d_j^\ze}, \quad \text{ for all } j.
    \]
    See \cite[Remark 5.3]{Comte:2018aa}. 
    In view of \eqref{eq:an1}, we conclude that 
    \begin{equation} 
        \ee_{d_j}(f) \ge c_1\, e^{d_j^\ze} - 2 d_j \log d_j  - C_2, \quad \text{ for all } j,
    \end{equation}
    for constants $c_1 = c_1(f) \in (0,1)$ and $C_2 = C_2(f) >0$. 
\end{remark}

\subsection{Denjoy weights}

We define the towers of exponentials and logarithms, 
\begin{equation}
    E_0(x) = L_0(x) := x, \quad E_k(x) := \exp (E_{k-1}(x)), \quad 
     L_k(x) := \log (L_{k-1}(x)), 
\end{equation}
for $k \ge 1$,
and set $e_k:= E_k(1)$.

\begin{definition}[Denjoy weights] \label[d]{d:Dw}
Let $s\in [0,\infty)$ and set $k = \lceil s\rceil \in \N$ and $\si = k-s \in [0,1)$.  
We define 
\begin{equation}
    (\mu_s)_j := j L_1(e_kj) L_2(e_k j) \cdots L_{k-1}(e_k j) (L_k(e_k j))^{1-\si}, \quad  j\ge 1.
\end{equation}
Then $\mu_s^{|\infty} := ((\mu_s)_j)_{j \ge 1}$ is an admissible quasianalytic weight for all $s \in [0,\infty)$. 
(Quasianalyticity follows from the integral test, in view of \eqref{eq:it}.)
Since $(\mu_s)_j/j$ is increasing, so is the sequence $a_j$ defined in \eqref{eq:aj}, i.e., \eqref{eq:ai} holds with $H=1$. 

The sequences $\mu_s^{|\infty}$ are called \emph{Denjoy weights}.
Note that $\mu_0^{|\infty}$ coincides with the analytic weight $\mu_{\on{an}}^{|\infty}$.
\end{definition}

Let us investigate the associated function $\et_{\mu^{|\infty}_s}$.
The following proposition extends \cite[Proposition A.1]{Rainer:2026aa}. 

\begin{proposition} \label[p]{p:Denjoy}
    Let $s \ge 0$ and $a\ge 1$. Then $\et_{a\,\mu_s^{|\infty}} : \N_{\ge 1} \to \N$ is bounded by a polynomial  if and only if $s \in [0,1]$.  
    
    For $s=k \in \N$ and $m \in \N_{\ge 1}$, 
    \begin{equation} \label{eq:k} 
        \frac{E_{k+1}(f_k(a,m)-1)}{e_k}-1   \le  \tfrac{1}{2}\et_{a\, \mu_{k}^{|\infty}}(m)  \le \frac{E_{k+1}(f_k(a,m)+1)}{e_k}-1,
\end{equation}
where $f_k(a,m) := 2a e+ L_{k+1}(e_k(m+1))$.
In particular,
\begin{align} \label{eq:k0}
    e^{2ae-1}(m+1)-1 \le &\tfrac{1}{2}\et_{a\, \mu_{0}^{|\infty}}(m) \le e^{2ae+1} (m+1) - 1,
    \\ \label{eq:k1}
  e^{e^{2ae  -1}-1} (m+1)^{e^{2ae -1}}-1  \le &\tfrac{1}{2}\et_{a\, \mu_{1}^{|\infty}}(m) \le e^{e^{2ae  +1}-1} (m+1)^{e^{2ae  +1}}-1.
\end{align}
\end{proposition}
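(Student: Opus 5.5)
The plan is to compute $\et_{a\mu_s^{|\infty}}(m)$ explicitly, comparing the defining sum with an integral. By \Cref{r:nnprop}(1), $\et_{a\mu_s^{|\infty}}(m)=\nn(\mu_s^{|\infty},e^m,2a,1)$. Here $j_0=\lceil \log e^m\rceil_\N=m$, so
\[
\tfrac12\et_{a\mu_s^{|\infty}}(m)=\sup\Big\{n : \sum_{j=m+1}^n \frac{1}{(\mu_s)_j}<2ae\Big\}.
\]
Everything therefore reduces to estimating partial sums of $1/(\mu_s)_j$.

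For $s=k\in\N$, the key identity is
\[
\frac{d}{dx}L_{k+1}(e_kx)=\frac{1}{x\,L_1(e_kx)\cdots L_k(e_kx)}=\frac{1}{(\mu_k)_x},
\]
which I will check by induction on $k$ using $L_1(e_kx)'=1/x$. Since this integrand is decreasing and each $L_i(e_kx)\ge 1$ for $x\ge1$, integral comparison gives
\[
L_{k+1}(e_k(n+1))-L_{k+1}(e_k(m+1))\le\sum_{j=m+1}^n\frac1{(\mu_k)_j}\le L_{k+1}(e_kn)-L_{k+1}(e_km),
\]
and the boundary error is controlled by $L_{k+1}(e_k(m+1))-L_{k+1}(e_km)\le 1/(\mu_k)_m\le 1$.

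Taking the largest $n$ with sum $<2ae$ and inverting $L_{k+1}$ with $E_{k+1}$ then yields
\[
E_{k+1}(f_k(a,m)-1)/e_k\le n+1\le E_{k+1}(f_k(a,m)+1)/e_k .
\]
The $\pm1$ absorbs both the shift from $m$ to $m+1$ and the discreteness of $n$; this bookkeeping is routine but needs care. This gives \eqref{eq:k}. Specializing to $k=0$ (where $E_1(x)=e^x$, $e_0=1$, $L_1(e_0(m+1))=\log(m+1)$) gives \eqref{eq:k0}. For $k=1$, unwinding $E_2(2ae\pm1+\log\log(e(m+1)))/e$ gives \eqref{eq:k1}. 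Both bounds are polynomial in $m$.

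For $s\in(0,1)$ we have $k=1$, and termwise $j\le(\mu_s)_j\le j\log(ej)$, i.e.\ $\mu_0\le\mu_s\le\mu_1$. Smaller weights give larger sums, hence smaller $\nn$, so $\et_{a\mu_1}\le\et_{a\mu_s}\le\et_{a\mu_0}$. The upper bound \eqref{eq:k0} is polynomial, which settles the ``if'' direction.

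For the ``only if'' direction ($s>1$), I will show superpolynomial growth. First take $s\in(1,2)$: then $k=2$, $\si\in(0,1)$, and
\[
\frac{d}{dx}\frac{L_2(e_2x)^\si}{\si}=\frac{1}{x\,L_1(e_2x)\,L_2(e_2x)^{1-\si}}.
\]
The same comparison gives $L_2(e_2n)^\si\approx L_2(e_2m)^\si+2ae\si$. Using $(t^\si+c)^{1/\si}\ge t+c'\,t^{1-\si}$, this yields
\[
\log n\gtrsim \log m\cdot \exp\big(c'(\log\log m)^{1-\si}\big),
\]
which eventually exceeds any multiple of $\log m$; hence $n$ is not polynomially bounded. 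The case $s=2$ follows from \eqref{eq:k}, since then $n\approx E_3(\cdots)$.

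For general $s\ge2$, I will reduce to the previous case by comparison: $\mu_s\ge c\,\mu_{s'}$ for some $s'\in(1,2)$, since the extra factors $L_i\ge1$ and the shifts between $e_k$ and $e_{k'}$ change things only by bounded multiplicative constants. A weight that is larger by the factor $c$ has sum smaller by $1/c$, which amounts to replacing $a$ by $ca$; the lower bound from the $s'$ computation is superpolynomial for every $a$. This comparison of Denjoy weights across different $k$, together with the superpolynomial lower bound for fractional $\si$, is where I expect the main care to be needed.
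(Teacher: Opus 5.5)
\textbf{The ``if'' direction for $s\in(0,1)$ has the monotonicity reversed.} Your integer case $s=k$ is the paper's argument: compare $\sum 1/(\mu_k)_j$ with $g(t)=L_{k+1}(e_kt)$, then invert with $E_{k+1}$. Your bookkeeping does give \eqref{eq:k}, \eqref{eq:k0}, \eqref{eq:k1}. The problem is the step for $s\in(0,1)$. Write $\et_{a\mu_s}$ for $\et_{a\,\mu_s^{|\infty}}$. Larger weights make $\sum_{j=m+1}^n 1/\mu_j$ smaller, so more terms are needed to reach $2ae$ and $\nn$ is larger. That is what your own sentence says: smaller weights, larger sums, smaller $\nn$. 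So $\mu_0\le\mu_s\le\mu_1$ gives $\et_{a\mu_0}\le\et_{a\mu_s}\le\et_{a\mu_1}$, the reverse of what you wrote. Hence \eqref{eq:k0} is only a \emph{lower} bound for $\et_{a\mu_s}$.

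The inequality $\et_{a\mu_s}\le\et_{a\mu_0}$ is in fact false for $s\in(0,1)$. Comparing with $(\log(ex))^{1-s}/(1-s)$, whose derivative is $1/(\mu_s)_x$, gives $\tfrac12\et_{a\mu_s}(m)+1\ge m\exp\big(2ae(\log(em))^{s}\big)$. This is superlinear in $m$, whereas \eqref{eq:k0} is linear. The repair is to use the upper bound \eqref{eq:k1} for $\mu_1$, which is polynomial of degree about $e^{2ae+1}$. This is exactly what the paper does, via $(\mu_s)_j\le(\mu_t)_j$ for $s\le t$.

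\textbf{The ``only if'' direction is correct and takes a different route.} The paper argues by contradiction. It assumes $n\le C(e_2(m+1))^N$ and uses the mean value theorem to show $(L_2(Cz^N))^\si-(L_2(z))^\si\to0$. You instead derive a direct lower bound, $\log(n+1)\gtrsim\log m\cdot\exp\big(2ae(\log\log m)^{1-\si}\big)$, from Bernoulli's inequality $(t^\si+c)^{1/\si}\ge t+\tfrac{c}{\si}t^{1-\si}$. Both rest on the same fact. Yours has the advantage of giving an explicit superpolynomial growth rate.

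Your ``$\approx$'' can be replaced by an honest inequality. Maximality of $n$ gives $2ae\le\sum_{j=m+1}^{n+1}1/(\mu_s)_j\le\si^{-1}\big(L_2(e_2(n+1))^\si-L_2(e_2m)^\si\big)$, so no boundary error enters. This works for every $a>0$.

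The reduction from $s\ge2$, which you expect to need care, is immediate. We have $(\mu_s)_j\le(\mu_t)_j$ for all $j$ whenever $s\le t$, because $e_k\le e_{k+1}$, each $L_i$ is increasing, and $L_k(e_kj)\ge1$. So $c=1$ works, and your separate treatment of $s=2$ is not needed.
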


\begin{proof}
    Let us first prove \eqref{eq:k}.
Set
$A^k_n := \sum_{j=1}^n \frac{1}{(\mu_k)_j}$, for $n \ge 1$, and $A^k_0 := 0$.
    Consider $g(t) = L_{k+1}(e_k t)$, for $t \ge 1$. Then
    \begin{align} \label{eq:it}
        g'(t) &= \frac{1}{t L_1(e_kt) L_2(e_k t) \cdots L_k(e_k t)}
    \end{align}
    is decreasing, i.e., $g$ is concave. Thus
    \[
        g(j+1) - g(j) \le g'(j) \le g(j) - g(j-1), 
    \]
    where the left inequality holds for $j\ge 1$ and the right for $j \ge 2$.
    Summing over $j$, we find 
    \[
        g(n+1) - g(1) \le A^k_n \quad \text{ and } \quad A^k_n -A^k_1 \le g(n)-g(1)   
    \]
    which implies 
    \begin{equation} \label{eq:A1}
        L_{k+1}(e_k(n+1)) \le  A^k_n 
    \le L_{k+1}(e_k n) + 1, \quad n \ge 1.
    \end{equation}

    For $x > 1$ define the positive integer $n(x)$ by 
    \begin{equation} \label{eq:A1x}
        A^k_{n(x)} < x \le A^k_{n(x)+1}.
    \end{equation}
    By \eqref{eq:A1} and \eqref{eq:A1x},
    \begin{equation} \label{eq:A1n}
        \frac{E_{k+1}(x-1)}{e_k}- 1  \le  n(x) < \frac{E_{k+1}(x)}{e_k}- 1.
    \end{equation}

    By \eqref{eq:A1x}, we have 
    \begin{equation}
        \sum_{j=m+1}^{n(x)} \frac{1}{(\mu_k)_j}  < x- A^k_{m} \le  \sum_{j=m+1}^{n(x)+1} \frac{1}{(\mu_k)_j}
    \end{equation}
    so that, setting $x = 2ae +A^k_{m}$, we get 
    \[
         \sum_{j=m+1}^{n} \frac{1}{(\mu_k)_j} < 2ae \le \sum_{j=m+1}^{n+1} \frac{1}{(\mu_k)_j} 
    \]
    with $n=n(2ae + A^k_{m})$, since $2ae+ A^k_{m}> 1$.
    Thus, by \eqref{eq:A1} and \eqref{eq:A1n},
    \begin{equation}
        \frac{E_{k+1}(2ae-1 +L_{k+1}(e_k (m+1)))}{e_k}-1  \le n \le \frac{E_{k+1}(2ae+1 +L_{k+1}(e_k (m+1)))}{e_k}-1.
    \end{equation}
    This implies \eqref{eq:k}, since $n=\frac{1}{2} \et_{a\,\mu_k^{|\infty}}(m)$. 

    Now \eqref{eq:k0} and \eqref{eq:k1} easily follow from \eqref{eq:k}.
    Since $(\mu_{s})_j \le (\mu_{t})_j$ for all $j$ if $s \le t$ and thus $\et_{a\, \mu_{s}^{|\infty}} \le \et_{a\, \mu_{t}^{|\infty}}$, 
    we conclude that  $\et_{a\, \mu_{s}^{|\infty}}$ 
    is bounded by a polynomial, for all $s\in [0,1]$.

To see that $\et_{a\, \mu_{s}^{|\infty}}$  cannot be bounded by a polynomial if $s>1$ it is enough to show this for $s \in (1,2)$
and $a=1$.
Fix such $s$, let $\si = 2 -s \in (0,1)$, and
    consider $g(t) = (\log\log(e^e t))^{\si}$, for $t\ge 1$. Since 
    \[
        g'(t) = \frac{\si}{t \log(e^e t) (\log\log(e^et))^{1-\si}}
    \]
    and $g$ is concave, we find similarly to \eqref{eq:A1},
    \begin{equation} \label{eq:ll1}
        (\log \log(e^e (n+1)))^{\si} -1  \le  \si A^s_n \le (\log \log(e^e n))^{\si} - 1 +\tfrac{\si}{e}, \quad n\ge 1, 
    \end{equation}
    where $A^s_n = \sum_{j=1}^n \frac{1}{(\mu_s)_j}$ and noting that $A^s_1 = \frac{1}{e}$. As above,
    let $n(x)$, for $x>1$, be the positive integer satisfying $A^s_{n(x)} < x \le A^s_{n(x)+1}$. Then, by \eqref{eq:ll1},
    \begin{equation}
        e_2(n(x)+1) \ge  E_2((\si x +1 -\tfrac{\si}{e})^{\frac{1}{\si}}) 
    \end{equation}
    and, setting $n = n(2e + A^s_m) =\frac{1}{2} \et_{\mu_{s}^{|\infty}}(m)$, we find
   \begin{align*}
       e_2(n+1) &\ge  E_2((2\si e + \si A^s_m +1 -\tfrac{\si}{e})^{\frac{1}{\si}})      
       \ge E_2(((2 e -\tfrac{1}{e})\si + (L_2(e_2 (m+1)))^{\si} )^{\frac{1}{\si}}).
   \end{align*}
   Suppose for contradiction that $n$ can be bounded by a polynomial in $m$. Then there exist $C\ge 1$ and $N \in \N_{\ge 1}$
   such that, for all $m$, 
   \[
       E_2(((2 e -\tfrac{1}{e})\si  + (L_2(e_2 (m+1)))^{\si} )^{\frac{1}{\si}}) \le C (e_2(m+1))^N,
   \]
   or equivalently,
   \begin{equation} \label{eq:con}
       (2 e -\tfrac{1}{e})\si    \le  (L_2(C (e_2(m+1))^N))^\si - (L_2(e_2 (m+1)))^{\si}.
   \end{equation}
   Setting $z=e_2(m+1)$, we have
   \begin{align}
       L_2(C (e_2(m+1))^N)& = L_2(C z^N) = \log(\log C + N \log z) 
       \\
                                       &= \log\Big(N\log z \Big(1+\frac{\log C}{N \log z}\Big) \Big)
       \\
                                       &= \log\log z + \log N + o(1)
   \end{align}
   as $z \to \infty$.
   By the mean value theorem, the right-hand side of \eqref{eq:con} equals
   \begin{align} \label{eq:mvt}
       (\log\log z + \log N + o(1))^\si - (\log\log z)^{\si} = \si \ze^{\si-1} (\log N + o(1)),
   \end{align}
   for some $\ze$ between $\log\log z + \log N + o(1)$ and $\log\log z$. 
   Since $\si \in (0,1)$, \eqref{eq:mvt} tends to zero as $z \to \infty$ which contradicts \eqref{eq:con}.
\end{proof}

\begin{corollary} \label[c]{c:Denjoy1}
    Let $s \in [0,1]$. 
    If $f : [0,1] \to [0,1]$ is a transcendental $(\mu_s^{|\infty},M_0)$-smooth function, then
    $\zz_d(f)$ is bounded by a polynomial in $d$ and $\ee_d(f)$.
\end{corollary}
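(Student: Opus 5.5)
The plan is to combine \Cref{c:zded} with the explicit bounds of \Cref{p:Denjoy}. Since $(\mu_s)_j/j$ is increasing, \eqref{eq:ai} holds with $H=1$ (see \Cref{d:Dw}). So \eqref{eq:ze} gives
\[
    \zz_d(f) \le \et_{\mu_s^{|\infty}}\big(2d\log d + \ee_d(f) + 4\ol m - \log 2\big), \quad d\ge 1,
\]
where $\ol m = M_0\sqrt{1+1/M_1^2}$ depends only on $M_0$ and $\mu_1$. It therefore suffices to bound $\et_{\mu_s^{|\infty}}(t)$ by a polynomial in $t$, uniformly for real $t\ge 0$.

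The first step is to note that $(\mu_s)_j \le (\mu_1)_j$ for all $j$ when $s\le 1$. This gives $\et_{\mu_s^{|\infty}}\le \et_{\mu_1^{|\infty}}$, as used in the proof of \Cref{p:Denjoy}, so we may take $s=1$. We then apply \eqref{eq:k1} with $a=1$ and $m=\lceil t\rceil$ (using $\et(t)=\et(\lceil t\rceil)$), for $m\ge 1$:
\[
    \et_{\mu_1^{|\infty}}(m) \le 2e^{e^{2e+1}-1}(m+1)^{e^{2e+1}} .
\]
This is a polynomial in $m$ with constants independent of $f$. For $t\in[0,1]$ we use monotonicity: $\et(t)\le \et(1)$.

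The second step is substitution. Set $t = 2d\log d + \ee_d(f) + 4\ol m - \log 2 \ge 0$. Then $m+1\le t+2 \le 2d^2 + \ee_d(f) + C_0$ with $C_0 = 4\ol m+2$, using $2d\log d\le 2d^2$. Hence
\[
    \zz_d(f) \le C_1\,\big(2d^2+\ee_d(f)+C_0\big)^{e^{2e+1}},
\]
which is polynomial in $d$ and $\ee_d(f)$. The exponent $e^{2e+1}$ need not be an integer, so we replace it by its ceiling to get a genuine polynomial.

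The only subtlety is this bookkeeping: the real argument $t$ versus the integer $m$, the case $t<1$, and making the exponent an integer. There is no real obstacle, since all the analytic work is already done in \Cref{p:Denjoy}.
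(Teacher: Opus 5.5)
Your proposal is correct and is the paper's argument: the paper's proof is the one line ``combine \Cref{c:zded} with \Cref{p:Denjoy},'' and you have filled in the reduction to $s=1$, the rounding $\et(t)=\et(\lceil t\rceil)$, and the integer exponent. The one claim you left unjustified, $t\ge 0$, does hold. Indeed $(\mu_s)_1=1$ for $s\in[0,1]$, so $\ol m=\sqrt{M_0^2+1/\mu_1^2}\ge 1>\tfrac14\log 2$, and in fact $t>0$.
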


\begin{proof}
    This is a consequence of \Cref{c:zded} and \Cref{p:Denjoy}.
\end{proof}

\section{On the number of rational points on quasianalytic graphs} \label{sec:rational}

It is well-known that polynomial B\'ezout bounds give good bounds on the number of rational points (see \cite{Pila:2006ab}). 
In view of \Cref{c:Denjoy1}, it is thus desirable to detect instances where $\ee_d(f)$ is at most polynomial in $d$.

\subsection{Polynomial bounds for \texorpdfstring{$\ee_d(f)$}{ed(f)} and rational points on \texorpdfstring{$\Ga(f)$}{Gamma(f)}}

In \Cref{c:rat}, we adapt the arguments of \cite{Pila:2006ab} to our setting.
Recall that the \emph{height} of a rational number $x \in \Q$ is $\max\{ |p|,|q|\}$, where $p,q \in \Z$ are coprime 
such that $x = p/q$. The height of $(x,y) \in \Q^2$ is the maximum of the heights of $x$ and $y$.

\begin{corollary} \label[c]{c:rat}
    Let $s \in [0,1]$ and let $f : [0,1] \to [0,1]$ be a transcendental function of class 
    $\cC_{M_s}$ associated with the Denjoy weight $\mu_s^{|\infty}$ (with $(M_s)_0=1$)
    such that $\ee_d(f)$ is bounded by a polynomial in $d$.
    Then the number of rational points of height at most $H$ on the graph of $f$ is bounded by a polynomial in $\log H$. 
\end{corollary}

\begin{proof}
By \cite[Proposition 2.4]{Pila:2006ab}, the set $\Ga(f)(\Q;H)$ of rational points of height at most $H$ in $\Ga(f)$ 
is contained in the union of at most $6B (\frac{(d+1)(d+2)}{2})^C\, H^{8/(d+3)}$ algebraic curves of degree $d$, where $d \ge 5$ and $B,C$ 
are constants depending on the class of $f$. (The Denjoy--Carleman class $\cC_{M_s}$ is contained in the class $M(B,C)$ 
of mild functions used in \cite{Pila:2006ab}.)

Let $d = \lfloor \log H \rfloor$, assuming that $H \ge e$.
Then $\Ga(f)(\Q;H)$ is contained in the union of at most
\[
    6B\, 3^C\, e^8 (\log H)^{2C}
\]
algebraic curves of degree $d$, using that $\frac{(d+1)(d+2)}{2} \le 3 (\log H)^2$ and $H^{8/(d+3)} \le e^8$.
Thus, the number of points in 
$\Ga(f)(\Q;H)$ is bounded by
\[
    6B\, 3^C\, e^8 (\log H)^{2C} \cdot \zz_{\lfloor \log H \rfloor}(f)
\]
which is bounded by a polynomial in $\log H$, by \Cref{c:Denjoy1}, provided that 
$\ee_d(f)$ is bounded by a polynomial in $d$. Thus \Cref{c:rat} is proved.
\end{proof}

When $\ee_d(f)$ is bounded by a polynomial in $d$?
By \Cref{r:sp}, there are real analytic $f$ for which this is not the case.
On the other hand, it holds for real analytic functions of the form $f = \sum_{j=1}^k p_j e^{q_j}$, where 
$p_j,q_j$ are real polynomials; see \cite[Section 2.7]{BRUDNYI:2015vu}. 
It seems to be difficult to identify \emph{concrete} non-analytic quasianalytic functions with this property. 

We will provide a family of functions $f_\xi : [0,1] \to [0,1]$ in $\cC_{M_s}$, for $s \in (0,1]$, 
parameterized by a sequence of independent random variables, each uniformly distributed in $[-1,1]$,
such that 
$f_\xi$ is almost surely non-analytic and transcendental, and $\ee_d(f)$ is almost surely bounded by a polynomial in $d$.

\begin{definition}[The functions $f_\xi$] \label[d]{d:fa}
 Let $\mu^{|\infty} = (\mu_j)_{j \ge 1}$ be any admissible weight (not necessarily quasianalytic) and $M_0=1$ such that 
 the associated function
\[
    \om_M(t) := \sup_{k \in \N} \log \frac{t^k}{M_k}, \quad t>0,
\]
satisfies $\om_M(t) = o(t)$ and $\om_M(2t) = O(\om_M(t))$ as $t \to \infty$. 
Let $\xi = (\xi_n)_{n \ge 1}$ be a sequence of independent random variables, each uniformly distributed on $[-1,1]$.
    Define
    \begin{equation}
        f_\xi(x) := \tfrac{1}{2} + \sum_{n\ge 1} \xi_n c_n \cos(2\pi n x), \quad \text{ where } c_n := \frac{1}{4 n^2 e^{\om_M(2\pi n)}}.
    \end{equation}
\end{definition}

\begin{remark}
Note that $\om_M(t) = o(t)$ as $t \to \infty$ means that $\cC_M$ contains the real analytic functions as 
a proper subclass. This condition is satisfied if the associated sequence $a_j$ (see \eqref{eq:aj}) tends to $\infty$.
Moreover, $\om_M(2t) = O(\om_M(t))$ as $t \to \infty$ is a technical condition which holds provided that $\liminf_{j \to \infty} \frac{\mu_{\ell j}}{\mu_j} >1$ for some 
positive integer $\ell$; see \cite[Lemma 12]{BMM07}.
Clearly, these conditions are satisfied by all Denjoy weights.
Moreover, $\log t = o(\om_M(t))$ as $t \to \infty$ by definition.
\end{remark}

\begin{proposition} \label[p]{p:as}
    The family $\{f_\xi\}$ has the following properties:
    \begin{enumerate}
        \item $f_\xi : [0,1] \to [0,1]$ belongs to $\cC_M$.
        \item Almost surely, $f_\xi$ is not real analytic. More generally, almost surely $f_\xi$ does not belong to any class $\cC_{L}$ with $\om_M(t) = o(\om_L(t))$ as $t \to \infty$, 
            where $L$ is any weight sequence as in \Cref{d:fa}.
        \item  Almost surely, $\ee_d(f_\xi)$ is bounded by a polynomial in $d$ of degree $7$ (whose coefficients depend on $\xi$). 
    \end{enumerate}
\end{proposition}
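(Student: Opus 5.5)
For (1), first note that $\om_M(t)\ge \log(t^0/M_0)=0$, so $c_n\le \frac{1}{4n^2}$ and $|f_\xi(x)-\tfrac12|\le \sum_n \frac{1}{4n^2}=\frac{\pi^2}{24}<\frac12$; hence $f_\xi$ maps $[0,1]$ into $[0,1]$. For the derivatives I would use that, by definition of $\om_M$, $(2\pi n)^k e^{-\om_M(2\pi n)}\le M_k$ for every $k$. Termwise differentiation then gives $\|f_\xi^{(k)}\|_{\R}\le \sum_n c_n(2\pi n)^k\le \frac{\pi^2}{24}M_k$. So $f_\xi$ is a $1$-periodic $\cC_M$ function on all of $\R$, uniformly in $\xi$.

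For (2), suppose $f_\xi|_{[0,1]}\in\cC_L$. Then $f_\xi$ is even and $1$-periodic, so it is $\cC_L$ on $\R$. Integrating by parts $k$ times in the Fourier coefficient gives $|\xi_n c_n|\le C\inf_k \rho^kL_k/(2\pi n)^k=Ce^{-\om_L(2\pi n/\rho)}$. Iterating $\om_L(2t)=O(\om_L(t))$ yields $\om_L(2\pi n/\rho)\ge c\,\om_L(2\pi n)-C'$ for some $c>0$. Combining, $|\xi_n|\le C''n^2\exp(\om_M(2\pi n)-c\,\om_L(2\pi n))$. Since $\om_M=o(\om_L)$ and $\log t=o(\om_M)$, the right-hand side is $\le n^{-2}$ for large $n$. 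However, $P(|\xi_n|<n^{-2})=n^{-2}$ is summable, so by Borel--Cantelli, almost surely $|\xi_n|\ge n^{-2}$ for all large $n$, a contradiction. This shows that almost surely $f_\xi\notin\cC_L$ for each fixed $L$. Real analyticity is the case $L=(j!)$, where $\om_L(t)\sim t$ and $\om_M=o(t)$ by assumption. Transcendence should follow similarly, or as a by-product of (3), since $\ee_d(f_\xi)<\infty$ forces $p(x,f_\xi(x))\not\equiv0$.

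For (3), the plan is a small-ball estimate combined with a net argument and Borel--Cantelli in $d$. Write $p=\sum_{i,j\le d}a_{ij}X^iY^j$. By Markov/Chebyshev-type coefficient bounds, the coefficient norm $|a|$ is comparable to $\|p\|_{[0,1]^2}$ up to a factor $e^{O(d)}$. Moreover $\|p\|_{\Ga(f_\xi)}\ge\|p\circ\ol f_\xi\|_{L^2[0,1]}$, so it suffices to show that almost surely, for all large $d$,
\begin{equation}
\inf_{|a|=1}\|p_a\circ\ol f_\xi\|_{L^2[0,1]}\ge e^{-Cd^7}.
\end{equation}
For fixed $a$, let $Q_a(\xi)$ denote this $L^2$ norm squared, viewed as a function of finitely many variables $\xi_1,\dots,\xi_{K}$ with $K\approx d^2$. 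After truncating the tail $n>K$, which costs $e^{-\om_M(2\pi K)}$, $Q_a$ is a polynomial of degree $2d$ in these variables. A Remez/Carbery--Wright anti-concentration inequality on the cube $[-1,1]^K$ gives $P(Q_a<\delta\,\|Q_a\|_{[-1,1]^K})\le C\,d\,K\,\delta^{1/(2d)}$. I then take a $e^{-Cd^{3}}$-net of the unit sphere in $\R^{(d+1)^2}$, use the Lipschitz dependence of $Q_a$ on $a$, take a union bound, and choose $\delta=e^{-Cd^7}$ so that the failure probabilities are summable in $d$. The exponent $7$ should emerge from this bookkeeping: the net size $e^{O(d^2\cdot d^{3})}$ must be beaten by $\delta^{1/(2d)}$.

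The main obstacle is the deterministic lower bound $\sup_{\xi\in[-1,1]^K}Q_a(\xi)\ge e^{-\mathrm{poly}(d)}$ for every unit $a$. This says that the functions $x^i(\frac12+\sum_{n\le K}\xi_nc_n\cos 2\pi nx)^j$ cannot all be nearly linearly dependent for every choice of $\xi$. My first attempt would be to exploit that $c_n\ge e^{-O(d^2)}$ for $n\le K$, since $\om_M(t)=o(t)$. I would vary $\xi$ so that $f_\xi$ restricted to $d+1$ disjoint small subintervals approximates, with error $e^{-\mathrm{poly}(d)}$, any prescribed trigonometric polynomial of degree $K$. In particular it can approximate a function that takes $d+1$ distinct constant-like values $y_0,\dots,y_d$ on those subintervals. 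Then $p(x,y_l)$ for $x$ in the corresponding pieces is controlled from below by univariate Remez and Lagrange interpolation in $y$, with losses $e^{O(d\log d)}$. Making the approximation and the Remez losses quantitative is where the real work lies, and it fixes the polynomial degree.
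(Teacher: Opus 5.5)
Your parts (1) and (2) are correct and essentially the paper's argument. In (2) you use the first Borel--Cantelli lemma with threshold $n^{-2}$ rather than $|\xi_n|\ge\frac12$ infinitely often. In both versions the null set is independent of $L$, which is what ``does not belong to any class $\cC_L$'' requires.

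Part (3) has a genuine gap: the net-plus-union-bound scheme cannot close, and the obstruction is structural, not bookkeeping.

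\begin{itemize}
\item The seminorm $a\mapsto\|p_a\circ\ol f_\xi\|_{L^2}$ has Lipschitz constant at least $1$; this is already seen in the direction of the constant polynomial.
\item So to transfer a lower bound $\tau$ from a net to the whole sphere, the mesh must be $\lesssim\tau$. That requires roughly $\tau^{-(d+1)^2}$ net points.
\item Carbery--Wright for a degree-$2d$ polynomial bounds the failure probability at a single point only by about $(\tau^2/\|Q_a\|_\infty)^{1/(2d)}$.
\item The union bound is therefore at least of order $\tau^{-(d+1)^2+1/d}$, which is useless for every $\tau$.
\end{itemize}

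Your numbers look summable only because the mesh $e^{-Cd^3}$ is much coarser than the threshold. At a net point $b$ the guaranteed lower bound for $\sqrt{Q_b}$ is only of size $e^{-Cd^7/2}$. Moving a distance $e^{-Cd^3}$ can change $\sqrt{Q_a}$ by as much as $e^{-Cd^3}$, which swamps it.

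There are two further problems. First, truncating at $K\approx d^2$ can perturb $\sqrt{Q_a}$ by up to about $d^2e^{-\om_M(2\pi K)}=e^{-o(d^2)}$, far above $e^{-Cd^7}$. You would instead have to condition on $\xi_{>K}$ and prove the deterministic estimate uniformly in the tail. Second, that deterministic lower bound, which you flag as the main obstacle, is not established.

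The missing idea is to replace the infimum over $p$ by a single scalar random variable, which is what the paper does.

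\begin{itemize}
\item Fix $N=\dim\cP_2^d$ points $x_k=\frac{2k-1}{4(N+1)}$ and the collocation matrix $A=(x_k^if_\xi(x_k)^j)$.
\item Since $\si_N(A)\ge|\det A|/\|A\|_2^{N-1}$ and $\|A\|_2\le N$, one gets deterministically $\ee_d(f_\xi)\le N\log N+\log(1/|\det A|)$. No net is needed.
\item Conditionally on $\xi_{>N}$, the vector $y=(f_\xi(x_k))_k$ is uniform on a translate $\Pi\subseteq[0,1]^N$ of the image of $[-1,1]^N$ under $(\cos 2\pi nx_k)_{k,n}\on{diag}(c_1,\dots,c_N)$. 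Its volume is computed exactly via the DCT-IV.
\item $\det A=D(y)$ is one polynomial of degree $m=\frac16d(d+1)(d+2)$.
\item A coefficient of $D$ that is a product of Vandermonde determinants gives $\|D\|_{[0,1]^N}\ge(2(N+1))^{-m}(8e)^{-dN}$.
\item The Remez inequality transfers this bound to $\Pi$ uniformly in the tail.
\item One Carbery--Wright estimate with $t=e^{-d^4}$ then gives failure probabilities $\lesssim m\,e^{-d^4/m}$, which are summable in $d$.
\end{itemize}

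The exponent $7$ comes from $m\sum_{k\le N}\log(1/c_k)=O(d^3\cdot d^2\cdot d^2)$, not from a net count.
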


\Cref{p:as} will be proved in \Cref{ssec:as}. 
In the proof, we will see that the condition $\om_M(t) = o(t)$ as $t\to \infty$ is only used for (2). For (1) and (3), the weaker condition $\om_M(t)=O(t)$ as
$t \to \infty$ suffices. Thus (1) and (3) also hold for the analytic class $M=(j!)$.

\begin{theorem} \label{t:rp}
    Let $M = M_s$ a Denjoy weight for $s \in (0,1]$ with $(M_s)_0 =1$. 
    Then, for the family $\{f_\xi\}$,
    the number of rational points of height at most $H$ on the graph of $f_\xi$ is almost surely bounded by a polynomial in $\log H$. 
\end{theorem}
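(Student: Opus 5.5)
The plan is to deduce \Cref{t:rp} directly from \Cref{c:rat}, by verifying its hypotheses almost surely via \Cref{p:as}. Fix $s\in(0,1]$ and $M=M_s$ with $(M_s)_0=1$. First we check that $M_s$ is a weight to which \Cref{d:fa} applies, i.e.\ $\om_{M_s}(t)=o(t)$ and $\om_{M_s}(2t)=O(\om_{M_s}(t))$. By the remark following \Cref{d:fa}, it suffices that $a_j\to\infty$, which holds since $(\mu_s)_j/j\to\infty$ for $s>0$, and that $\liminf_j (\mu_s)_{2j}/(\mu_s)_j\ge 2>1$, which is clear from the definition of Denjoy weights. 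So the family $\{f_\xi\}$ is well defined.

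Next we invoke \Cref{p:as}. By (1), every $f_\xi$ maps $[0,1]$ into $[0,1]$ and belongs to $\cC_{M_s}$. By (3), there is an event $\Omega_3$ of probability one on which $\ee_d(f_\xi)$ is bounded by a polynomial in $d$. It remains to secure transcendence almost surely, which \Cref{c:rat} requires and which is implicitly needed for $\ee_d(f_\xi)$ to be finite. Here the cleanest route is to observe that if $p(x,f_\xi(x))\equiv 0$ for some nonzero $p$, then $\|p\|_{\Ga(f_\xi)}=0$, so $\ee(p,f_\xi)=+\infty$ and $\ee_d(f_\xi)=\infty$ for $d\ge\deg p$. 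On $\Omega_3$, where $\ee_d(f_\xi)<\infty$ for all $d$, this is impossible, so $f_\xi$ is transcendental. (Alternatively, the text announces that $f_\xi$ is almost surely transcendental, which gives another probability-one event to intersect with.)

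On the full-measure event $\Omega_3$, all hypotheses of \Cref{c:rat} hold for $f=f_\xi$: $s\in(0,1]\subseteq[0,1]$, $f_\xi$ is a transcendental $[0,1]\to[0,1]$ function of class $\cC_{M_s}$ with $(M_s)_0=1$, and $\ee_d(f_\xi)$ is polynomially bounded. Hence the number of rational points of height at most $H$ on $\Ga(f_\xi)$ is bounded by a polynomial in $\log H$, whose coefficients depend on $\xi$.

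One subtlety is that \Cref{c:rat} uses the class $\cC_{M_s}$ through \Cref{c:Denjoy1}, which needs $f$ to be $(\mu_s^{|\infty},M_0)$-smooth for the exact Denjoy weight. Membership in $\cC_{M_s}$ only gives $(\rho\,\mu_s^{|\infty},C)$-smoothness. This is harmless, because \Cref{p:Denjoy} is stated for $a\,\mu_s^{|\infty}$ with arbitrary $a\ge1$, and the hypothesis \eqref{eq:ai} is preserved under scaling. I expect the main work in this theorem to sit entirely inside \Cref{p:as}(3), which we are allowed to assume; the remaining step is just this bookkeeping of constants.
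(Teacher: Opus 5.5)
Your proof is correct, and it differs from the paper's only at the one non-formal step: almost sure transcendence of $f_\xi$.

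\textbf{The paper's route.} It derives transcendence from \Cref{p:as}(2). If $p(x,f_\xi(x))\equiv 0$ on $[0,1]$, quasianalyticity of $\cC_{M_s}(\R)$ extends the identity to all of $\R$. The theorem of Siciak/Malgrange (a $\cC^\infty$ function satisfying a nontrivial polynomial relation is real analytic) then forces $f_\xi$ to be analytic, which happens with probability zero.

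\textbf{Your route.} You extract transcendence from \Cref{p:as}(3) itself. Strictly, $\ee_d(f)$ is only introduced for transcendental $f$, so you should state this step through the proof of (3) rather than through the statement with the convention $\log(\cdot/0)=+\infty$. On its full-measure event, $\det A=Q_d(\xi)\neq 0$ for all $d\ge d_0(\xi)$. Hence $\|p\|_{\Ga(f_\xi)}\ge N^{-1}\si_N(A)\|c\|_1>0$ for every nonzero $p\in\cP_2^d$. Since $\cP_2^{d'}\subseteq\cP_2^{d}$ for $d'\le d$, no nonzero polynomial vanishes on $\Ga(f_\xi)$.

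\textbf{Comparison.} Your argument is more elementary. It needs neither (2), nor quasianalyticity, nor Siciak--Malgrange. It would even give almost sure transcendence for the analytic weight $M=(j!)$, where (2) is unavailable. The paper's argument, by contrast, is independent of the random determinant estimate. It also shows the structural fact that non-analyticity of $f_\xi$ alone already forces transcendence.

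\textbf{Two side remarks.} Your parenthetical alternative is circular: the announced almost sure transcendence is justified only in the proof of this very theorem, so it cannot be invoked. The scaling bookkeeping in your last paragraph is unnecessary here. The estimate in the proof of \Cref{p:as}(1) gives $\|f_\xi^{(j)}\|_{[0,1]}\le \frac{\pi^2}{24}M_j\le M_j$ for $j\ge 1$, together with $\|f_\xi\|_{[0,1]}\le 1=M_0$. So $f_\xi$ is directly $(\mu_s^{|\infty},1)$-smooth.
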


\begin{proof}
    This follows from \Cref{c:rat} and \Cref{p:as}. 
    Note that $f_\xi$ is almost surely transcendent, because if there exists a nonzero polynomial $p \in \R[X,Y]$ such that $p(x,f_\xi(x)) = 0$, for all $x \in [0,1]$, 
    then $f_\xi$ is real analytic. 
    Indeed, the $1$-periodic function $f_\xi$ belongs to $\cC_{M_s}(\R)$ and, by quasianalyticity, $p(x,f_\xi(x))=0$, for all $x \in \R$.
    By \cite[Theorem 2]{Siciak70} or \cite[Proposition VI.3.11]{Malgrange66}, $f_\xi$ is real analytic.
\end{proof}

\subsection{Proof of \texorpdfstring{\Cref{p:as}}{Proposition 5.3}}\label{ssec:as}

(1) It is easy to check that $f_\xi$ is of class $\cC_M$  
because (see e.g.\ \cite[Proposition 3.2]{Komatsu73})
\[
    \frac{(2\pi n)^j}{e^{\om_M(2\pi n)}} \le \sup_{t>0} \frac{t^j}{e^{\om_M(t)}} = M_j. 
\]
Since $\sum_{n\ge 1}  c_n \le \sum_{n\ge 1} \frac{1}{4n^2} = \frac{\pi^2}{24}  < \frac{1}{2}$, 
$f_\xi$ maps $[0,1]$ into $[0,1]$.

(2) If the $1$-periodic function $f_\xi$ is of class $\cC_L$, then there exist constants $C, c>0$ such that the Fourier coefficients $\xi_n c_n$ of $f_\xi$ 
satisfy
\begin{equation} \label{eq:CL}
    |\xi_n| c_n  \le C e^{-\om_L(c n)}, \quad n \ge 1.
\end{equation}
Indeed,
\begin{equation} 
    |\hat f(n)| \le \inf_{j\in \N}\frac{\|f_\xi^{(j)}\|_{[0,1]}}{(2\pi n)^j} \le \inf_{j\in \N}\frac{C\rh^j L_j}{(2\pi n)^j} = C e^{-\om_L(cn)}
\end{equation}
with $c = 2\pi/\rh$.
Since $\om_M(t) = o(\om_L(t))$, $\om_L(2t) = O(\om_L(t))$, and $\log t = o(\om_L(t))$ as $t \to \infty$, there exists $n_0 \in \N$ such that 
\[
    c_n = \frac{1}{4 n^2 e^{\om_M(2\pi n)}} \ge e^{-\frac{1}{2} \om_L(c n)}, \quad n\ge n_0.
\]
By the Borel--Cantelli lemma, almost surely $|\xi_n| \ge 1/2$ for infinitely many $n$.
Hence,
\begin{equation} \label{eq:ra}
    |\xi_n| c_n \ge \tfrac{1}{2} e^{-\frac{1}{2} \om_L(c n)},
\end{equation}
almost surely, for infinitely many $n \ge n_0$, contradicting \eqref{eq:CL}. This proves (2);
the case $\om_L(t) =t$ shows that almost surely $f_\xi$ is not real analytic.

(3)
Set $N = \dim \cP_2^d = \frac{(d+1)(d+2)}{2}$.
Let $x_1,\ldots,x_N$ be distinct points in $[0,1]$ and $A$ the $N \times N$ matrix with entries $x_k^i f_\xi(x_k)^j$, 
for $1 \le k \le N$ and $0 \le i+j \le d$. 
For a polynomial $p(x,y) = \sum_{i + j \le d} c_{ij} x^i y^j \in \cP_2^d$ with coefficient vector $c = (c_{ij})$,
we have $\|p\|_{[0,1]^2} \le  \sum_{i+j\le d}|c_{ij}| = \|c\|_1$ and 
\begin{align*}
    \|p\|_{\Ga(f_\xi)} \ge \|Ac\|_\infty \ge N^{-1/2} \|Ac\|_2 \ge N^{-1/2} \si_{N}(A) \|c\|_2 \ge N^{-1} \si_N(A) \|c\|_1,
\end{align*}
where $\si_1(A) \ge \cdots \ge \si_N(A)$ are the singular values of $A$.
Using that $|\det A| = \si_1(A)  \cdots  \si_N(A) \le \si_N(A)  \|A\|_2^{N-1} \le \si_N(A) N^{N-1}$, we conclude that 
\[
    \frac{\|p\|_{[0,1]^2}}{\|p\|_{\Ga(f_\xi)}} \le \frac{N^N}{|\det A|}.
\]
This implies
\begin{equation} \label{eq:eed}
    \ee_d(f_\xi) \le N \log N + \log\Big(\frac{1}{|\det A|}\Big).
\end{equation}

Let $\xi_{\le N} :=(\xi_1,\ldots,\xi_N)$ and $\xi_{>N} := (\xi_{N+1},\xi_{N+2},\ldots)$.
We keep $\xi_{>N}$ fixed. Then $\xi_{\le N}$ is uniformly distributed in $[-1,1]^N$. 
We will estimate $|\det A|$ for the choice
\begin{equation} \label{eq:xk}
    x_k := \frac{2k-1}{4(N+1)}, \quad 1 \le k \le N. 
\end{equation}
Set $y_k := f_\xi(x_k)$ and $y= (y_1,\ldots, y_N)$.
This gives a decomposition $y^\top = a^\top + B \xi_{\le N}^\top$, where $B = C \on{diag}(c_1,\ldots,c_N)$, 
$C = (\cos(2\pi n x_k))_{1 \le k,n \le N}$, and $a = (a_1,\ldots,a_N)$ 
with $a_k = \frac{1}{2} + \sum_{n >N} \xi_n c_n \cos(2\pi n x_k)$.
It follows that $y$ is uniformly distributed in the parallelepiped $K = a + B [-1,1]^N$ contained in $[0,1]^N$. 

Let us compute the volume $|K|$. To this end,
let $\hat C = (\hat C_{k,n})$ be the $(N+1) \times (N+1)$ matrix, 
where $\hat C_{k,n} = (\frac{2}{N+1})^{1/2} \cos(2\pi n x_k)$ if  $1 \le k \le N+1$ and $1 \le n \le N$
and $\hat C_{k,0} = (\frac{1}{N+1})^{1/2}$ if $1 \le k \le N+1$. Then $\hat C$ (being a discrete cosine transform of type IV) is orthogonal 
and $(\frac{2}{N+1})^{1/2}\, C$ is its minor 
with $(N+1)$-st row and $0$-th column removed. Thus
\begin{equation}
    |\det C| = \Big(\frac{N+1}{2} \Big)^{N/2} |\hat C_{N+1,0}| = \Big(\frac{N+1}{2} \Big)^{N/2}\Big(\frac{1}{N+1}\Big)^{1/2}.
\end{equation}
It follows that 
\begin{equation} \label{eq:volK}
    |K| = 2^N \Big(\frac{N+1}{2} \Big)^{N/2}\Big(\frac{1}{N+1}\Big)^{1/2} c_1 \cdots c_N.
\end{equation}

For $x_k$ given by \eqref{eq:xk},
consider the homogeneous 
polynomial 
in the variables $y=(y_1,\ldots,y_N)$, 
\[
    D(y) = \det (x_k^i y_k^j)_{1\le k \le N, 0\le i+j\le d}
    = \sum_{\si} \on{sgn} \si \prod_{k=1}^N x_k^{i_{\si(k)}}  y_k^{j_{\si(k)}},
\]
where $\si$ ranges over all bijections from $\{1,\ldots,N\}$ to $\{(i,j) : 0 \le i+j \le d\}$. 
The degree of $D(y)$ is $m:= \sum_{0 \le i + j \le d}j =\sum_{j=0}^d j(d+1-j) = \frac{1}{6}d(d+1)(d+2)$.
Let $b_e$ for $e= (e_1,\ldots,e_N)$ denote the coefficient of the monomial $\prod_{k=1}^N y_k^{e_k}$ in $D(y)$. 
Then $b_e$ is up to sign a product of Vandermonde determinants 
\[
    \prod_{j=0}^d \det(x_k^i)_{k \in R_j, 0 \le i \le d-j},  
\] 
where $R_j = \{k : e_k = j\}$. 
Thus, for the choice of $e$ which corresponds to $R_0$ consisting of the first $d+1$ points,  $R_1$ of the next $d$ points, and so on,
we find
\[
    |b_e| \ge (2(N+1))^{-m}.
\]
The coefficients of a univariate real polynomial $p$ of degree $d$ are bounded by $(8e)^{d} \|p\|_{[0,1]}$; indeed,
by comparison with the Chebyshev polynomial (see \cite[Theorem 16.3.3]{RS02}) and the Remez inequality, each coefficient 
is bounded by $e^d \|p\|_{[-1,1]} \le (8e)^{d} \|p\|_{[0,1]}$.
Since each $y_k$ has degree at most $d$ in $D(y)$, we find by iteration that 
$|b_e| \le (8e)^{d N} \|D\|_{[0,1]^N}$ and hence 
\begin{equation} \label{eq:lb1}
    \|D\|_{[0,1]^N} \ge (2(N+1))^{-m}(8e)^{-dN}.
\end{equation}

By the Remez inequality (see \cite{Brudnyi:1973un}),
\begin{equation} \label{eq:lb2}
    \|D\|_{[0,1]^N} \le \Big(\frac{4N}{|K|}\Big)^m \|D\|_K,
\end{equation}
as well as, for $E := \{y \in K : |D(y)| \le (8N)^{-m} \|D\|_K\}$, 
\begin{equation} 
    \|D\|_K \le \Big(\frac{4N |K|}{|E|}\Big)^m \|D\|_E \le \Big(\frac{|K|}{2\, |E|}\Big)^m  \|D\|_K
\end{equation}
implying that $|E| \le \frac{1}{2} |K|$. Thus 
\begin{equation} \label{eq:L2}
    \int_{K} |D(y)|^2 \, dy \ge \int_{K\setminus E} |D(y)|^2 \, dy \ge\frac{1}{2} |K| \, (8N)^{-2m} \|D\|_K^2.
\end{equation}
Combining \eqref{eq:volK}, \eqref{eq:lb1}, \eqref{eq:lb2}, and \eqref{eq:L2}, we find
\begin{equation}
    \|D\|_{L^2(K;\frac{\cL^N}{|K|})} =  \Big(\int_{K} |D(y)|^2 \, \frac{dy}{|K|} \Big)^{1/2} \ge  \frac{1}{\sqrt 2} (64 N^2(N+1))^{-m} (8e)^{-dN} (c_1 \cdots c_N)^m.
\end{equation}
Since $m \sum_{k=1}^N \log(\frac{1}{c_k}) \le m N (\log(4 N^2) + \om_M(2\pi N)) = O(d^7)$, using that $\om_M(t) = O(t)$ as $t \to \infty$,
this implies that
\begin{equation} \label{eq:d7}
    \log\Big(\big(\|D\|_{L^2(K;\frac{\cL^N}{|K|})} \big)^{-1} \Big) = O(d^7).
\end{equation}
Note that \eqref{eq:d7} holds uniformly in $a$: the constants depend only on $M$ through the use of $\om_M(t) = O(t)$ as $t \to \infty$.

By \cite[Theorem 8]{Carbery:2001aa}, there is an absolute constant $\al>0$ such that, for all $t>0$, 
\begin{equation}
    \frac{1}{|K|} \big|\big\{y \in K : |D(y)| \le t\, \|D\|_{L^{1/m}(K,\frac{\cL^N}{|K|})} \big\}\big| \le \al\, t^{1/m}
\end{equation}
and, by \cite[Theorem 1]{Carbery:2001aa}, there is an absolute constant $\be>0$ such that 
\[\|D\|_{L^{2}(K,\frac{\cL^N}{|K|})} \le (\be m)^m \|D\|_{L^{1/m}(K,\frac{\cL^N}{|K|})}.\]
Thus
\begin{align*}
    \frac{1}{|K|} \big|\big\{y \in K : |D(y)| \le t\, \|D\|_{L^{2}(K,\frac{\cL^N}{|K|})}\big\}\big| \le \al \be m\, t^{1/m}.
\end{align*}
Taking $t = e^{-d^4}$, we have 
\begin{align} \label{eq:un}
    \frac{1}{|K|} \big|\big\{y \in K : |D(y)| \le e^{-d^4} \|D\|_{L^{2}(K,\frac{\cL^N}{|K|})}\big\}\big| \le \al \be m\, e^{-d^4/m}.
\end{align}
Note that the bound on the right-hand side is independent of $a$.

Let now $Q_d(\xi) = D(f_{\xi}(x_1),\ldots,f_{\xi}(x_N))$ denote the determinant of the $N \times N$ matrix with entries $x_k^i f_\xi(x_k)^j$, 
for $1 \le k \le N$ and $0 \le i+j \le d$, i.e., of $A$, where the $x_k$ are give by \eqref{eq:xk}. 
Consider the event 
\[
    E_d = \{\xi : |Q_d(\xi)| \le  e^{-d^4} \|Q_d\|_{L^{2}(K,\frac{\cL^N}{|K|})}\}
\]
and recall that $K$ depends on $a$. By Tonelli's theorem and \eqref{eq:un}, the probability $P(E_d)$ of $E_d$ 
is bounded by $\al \be m\, e^{-d^4/m}$ so that $\sum_d P(E_d) < \infty$.
By the Borel--Cantelli lemma, 
almost surely in $\xi$, the event $E_d$ is failing for all $d \ge d_0(\xi)$. 
For such $\xi$ and $d \ge d_0(\xi)$, \eqref{eq:d7} implies
\begin{equation}
  \log \Big( \frac{1}{|Q_d(\xi)|} \Big) \le d^4 + O(d^7) 
\end{equation}
In view of \eqref{eq:eed}, this completes the proof.

\subsection*{Acknowledgement}

This research was funded in whole or in part by the Austrian Science Fund (FWF) DOI 10.55776/PAT1381823.
For open access purposes, the author has applied a CC BY public copyright license to any author-accepted 
manuscript version arising from this submission.

The author acknowledges that \Cref{p:as} and its proof strategy were suggested by Claude Sonnet 5.5 (Anthropic).
The author independently verified the proof and incorporated it; he takes full responsibility for the content of the paper.


\def\cprime{$'$}
\providecommand{\bysame}{\leavevmode\hbox to3em{\hrulefill}\thinspace}
\providecommand{\MR}{\relax\ifhmode\unskip\space\fi MR }
\providecommand{\MRhref}[2]{%
  \href{http://www.ams.org/mathscinet-getitem?mr=#1}{#2}
}
\providecommand{\href}[2]{#2}

\end{document}